\definecolor{black}{rgb}{0.0, 0.0, 0.0}
\definecolor{red}{rgb}{1.0, 0.5, 0.5}
\newcommand{\margnote}[1]{
\ifthenelse{\boolean{shownotes}}%
{\marginpar{\raggedright\tiny\texttt{#1}}}%
{}%
}
\newcommand{\hole}[1]{
\ifthenelse{\boolean{shownotes}}%
{\begin{center} \fbox{ \rule {.25cm}{0cm} \rule[-.1cm]{0cm}{.4cm}
\parbox{.85\textwidth}{\begin{center} \texttt{#1}\end{center}} \rule
{.25cm}{0cm}}\end{center}} {} }
\title[Pressureless Euler-Poisson system]{On the pressureless damped Euler-Poisson equations with non-local forces: Critical thresholds and large-time behavior}
\author[Carrillo]{Jos\'{e} A. Carrillo}
\address[Jos\'{e} A. Carrillo]{\newline Department of Mathematics
    \newline Imperial College London, London SW7 2AZ, United Kingdom}
\email{carrillo@imperial.ac.uk}
\author[Choi]{Young-Pil Choi}
\address[Young-Pil Choi]{\newline Fakult\"at f\"ur Mathematik
    \newline  Technische Universit\"at M\"unchen, Boltzmannstra{\ss}e 3, 85748, Garching bei M\"unchen, Germany}
\email{ychoi@ma.tum.de}
\author[Zatorska]{Ewelina Zatorska}
\address[Ewelina Zatorska]{\newline Department of Mathematics
    \newline Imperial College London, London SW7 2AZ, United Kingdom}
\email{e.zatorska@imperial.ac.uk}
\subjclass{92D25, 35Q35, 76N10}
\keywords{flocking, alignment, hydrodynamics, regularity, critical thresholds.}
\numberwithin{equation}{section}
\newtheorem{theorem}{Theorem}[section]
\newtheorem{corollary}{Corollary}[section]
\newtheorem{proposition}{Proposition}[section]
\newtheorem{remark}{Remark}[section]
\newcommand{\R}{\mathbb R}
\newcommand{\ls}{\lesssim}
\newcommand{\N}{\mathbb N}
\newcommand{\mc}{\mathcal C}
\newcommand{\bq}{\begin{equation}}
\newcommand{\eq}{\end{equation}}
\newcommand{\lt}{\left}
\newcommand{\rt}{\right}
\newcommand{\pa}{\partial}
\newcommand{\mms}{\mathcal{S}}
\newcommand{\om}{\Omega}
\newcommand{\coss}{\cos\lt( \frac{\sqrt\square}{2} t\rt)}
\newcommand{\cosss}{\cos\lt( \frac{\sqrt\square}{2} t^*\rt)}
\newcommand{\sins}{\sin\lt( \frac{\sqrt\square}{2} t\rt)}
\newcommand{\sinss}{\sin\lt( \frac{\sqrt\square}{2} t^*\rt)}
\newcommand{\tanss}{\tan\lt( \frac{\sqrt\square}{2} t^*\rt)}
\newcommand{\Deltaa}{\Xi}
\newcommand{\etan}{\eta^{n+1,n}}
\newcommand{\vn}{v^{n+1,n}}
\newcommand{\eqv}[1]{\begin{equation}
\begin{split}
#1
\end{split}
\end{equation}}
\newcommand{\eqh}[1]{\begin{equation*}
\begin{split}
#1
\end{split}
\end{equation*}}
\newcommand{\lr}[1]{\left(#1\right)}
\begin{document}
\allowdisplaybreaks

\date{\today}


\begin{abstract} We analyse the one-dimensional pressureless Euler-Poisson equations with a linear damping and non-local interaction forces. These equations are relevant for modelling  collective behavior in mathematical biology. We provide a sharp threshold between the supercritical region with finite-time breakdown and the subcritical region with global-in-time existence of the classical solution. We derive an explicit form of solution in Lagrangian coordinates which enables us to study the time-asymptotic behavior of classical solutions with the initial data in the subcritical region.  
\end{abstract}

\maketitle \centerline{\date}


%
%
%
%
\section{Introduction}
We are interested in the following 1D system of pressureless  Euler-Poisson equations with non-local interaction forces and damping:
\begin{align}\label{main_eq}
\begin{aligned}
&\pa_t \rho + \pa_x (\rho u) = 0, \cr
&\pa_t (\rho u) + \pa_x (\rho u^2) = -\rho u -(\pa W \star \rho)\rho, \qquad W(x) = - |x| + \frac{|x|^2}{2},
\end{aligned}
\end{align}
for $ (t,x) \in \R_+ \times \om(t)$. Here, $\rho$ is  extended by 0 outside $\om(t)$ and $\om(t)$ denotes the interior of the support of the density $\rho$, i.e., $\om(t) := \{x \in \R : \rho(x,t) >0\}$. System \eqref{main_eq} is supplemented by the initial values of the density and the velocity
\bq\label{ini_main_eq}
(\rho(t,\cdot), u(t,\cdot))|_{t=0} = (\rho_0,u_0)\in H^2(\Omega_0)\times H^3(\Omega_0),
\eq
where $H^s(\Omega_0)$ stands for the standard Sobolev space of index $s>0$ and $$\om_0:=\om(0)=(a_0,b_0)$$ is an open bounded interval. It follows from \eqref{ini_main_eq} that the initial mass and momentum are finite; we denote them by
$$
0<M_0 := \int_{\Omega_0}\rho_0(x) dx \qquad\mbox{and}
\qquad
M_1:=\int_{\om_0} \rho_0(x)u_0(x)\,dx  \,.
$$

The hydrodynamic system \eqref{main_eq} has been formally derived from interacting particle systems in collective dynamics. Different authors developed several approaches involving moment methods either for particle descriptions directly \cite{CDMBC} or at the kinetic level together with monokinetic closures for the pressure term \cite{CDP}. Kinetic equations for collective behavior can be derived rigorously from particle systems via the mean-field limit, see \cite{CCR,review} and the references therein. Although the monokinetic closure of the moment system is not entirely justified, these pressuless hydrodynamic models as \eqref{main_eq} give qualitative numerical results comparable to the particle simulations of interacting agents, see \cite{CKMT,AP,KT} and the references therein.

Critical threshold phenomena for the one-dimensional Euler or Euler-Poisson system are studied in \cite{ELT, TW}. In particular, the damped Euler-Poisson system with a positive background state is considered in \cite{ELT} and sharp critical thresholds are obtained. For certain restricted multi-dimensional Euler-Poisson systems, we refer to \cite{LT1, LT2}.  In \cite{TT}, the critical thresholds were analysed for the so-called {\it Euler-alignemt system} which has a non-local velocity alignment force $F[\rho,u] = \psi \star (\rho u) - u (\psi \star \rho)$ with $\psi \geq 0$ instead of the linear damping and interaction force in \eqref{main_eq}. Note that if $\psi \equiv 1$, then the alignment force $F[\rho,u]$ becomes the linear damping under the assumption that the initial momentum is zero, i.e., $M_1 = 0$. These results were further improved in \cite{CCTT} by closing the gap between lower and upper thresholds. Other interaction forces, such as attractive/repulsive Poisson forces or general-type forces, are also taken into account in the Euler-alignment system in \cite{CCTT}. However, the critical thresholds with interaction forces were not sharp. In this work, we solve the problem with linear damping and Newtonian attractive forces by observing that the system \eqref{main_eq} has a very nice Lagrangian formulation allowing for explicit computations of the classical solutions. 

Associated to the fluid velocity $u(t,x)$, we define the characteristic flow $\eta(t,x)$ as
\bq\label{eq_tra}
\frac{d \eta(t,x)}{dt} = u(t,\eta(t,x)) \quad \mbox{with} \quad \eta(0,x) = x \in \om_0\,.
\eq
We first define a classical solution for our system \eqref{main_eq} with the initial data \eqref{ini_main_eq}. We say that $(\rho(t,x),u(t,x))$ is a classical local-in-time solution to \eqref{main_eq} with the initial data \eqref{ini_main_eq}, if there exists  time $T>0$ such that $\rho$ and $u$ are $\mc^1$ and $\mc^2$ respectively in the set $\{(t,x)\in [0,T) \times \Omega(t)\}$, the characteristics $\eta(t,x)$ associated to $u$ defined by \eqref{eq_tra} are diffeomorphisms for all $t\in [0,T)$ with $\Omega(t)=\eta(t, \Omega_0)$, and $\rho$ and $u$ satisfy pointwisely the equations \eqref{main_eq} in $\{(t,x)\in [0,T) \times \Omega(t)\}$ with initial data \eqref{ini_main_eq}. Here the time derivative at $t=0$ has to be understood as a one-side derivative. It is not difficult to see that that this definition ensures the equivalence between the classical solution of the system \eqref{main_eq}  and the classical solutions to its Lagrangian formulation  \eqref{lag_eq}, given below. We will elaborate more about it in the next section.

We now explain our strategy to find classical solutions to the system \eqref{main_eq}. In Section 2, we assume that $(\rho(t,x),u(t,x))$ is a classical local-in-time solution to \eqref{main_eq} with initial data \eqref{ini_main_eq} in order to find some explicit expression for the solution on the whole time interval of existence $[0,T)$. Then, in Section 3, we analyse the maximal time interval of existence of the classical solution based on its explicit expression. 
We show that these solutions are in fact global-in-time classical solutions under certain hypotheses on the initial data, and that otherwise they  blow up in a finite time. In the end of Section 3, we state our main theorem, Theorem \ref{Th:main}, which gives sharp critical thresholds for the system \eqref{main_eq}.  Further, in Section 4, we describe the long time asymptotic behaviour of the classical global-in-time solutions. We show that the limit profile for the density is a sharp discontinuous function:
$$
\rho_\infty(x)= \frac{M_0}2 \quad \mbox{ and } \quad u_\infty(x)=0 \quad \mbox{for }\quad x\in \Omega_\infty:= ( \Gamma- 1,\Gamma + 1)
$$ 
with
\bq\label{newintro}
\Gamma:= \frac{1}{M_0}\lt(\int_\R x\rho_0(x)\,dx + \int_\R \rho_0(x) u_0(x)\,dx \rt)\,.
\eq
Let us point out that Theorem \ref{Th:main} also holds  in the whole space for positive integrable initial density with finite initial center of mass and finite initial mean momentum. However, we cannot ensure that their long time asymptotic behavior is given by $\rho_\infty$.
In Appendix A, for the sake of completeness, we provide a local-in-time existence and uniqueness result of classical solutions in the sense used in this paper.

Let us emphasize, that the explicit solutions constructed in our paper are proven to be the only classical solutions of the system \eqref{main_eq}. The local-in-time existence and uniqueness of classical solutions to the Euler-Poisson system is known for the initial data being a small perturbation of the stationary state, see \cite{M,MP}. There, the authors assume that the density is positive on the whole line $\R$ and that it tends to zero as $x\to \pm\infty$. We are not aware of any result for local-in-time well-posedness of the pressureless Euler-Poisson system neither for a Cauchy problem, nor for a bounded interval. Therefore, our local-in-time existence and uniqueness result for classical solutions to \eqref{main_eq} makes the construction of solutions from Sections 2 and 3 complete and justifiable. Strictly speaking, they are the only classical solutions in their maximal time interval of existence. Let us also observe that, in contrast to \cite{M,MP,ELT}, our results hold for the case of compactly supported initial data.

%
%
%
%

\section{Explicit expressions of classical solutions} \label{Sec:2}
Let us denote $f(t,x) := \rho(t,\eta(t,x))$ and $v(t,x) := u(t,\eta(t,x))$. Using the characteristic flow, it is easy to check that $(\rho,u)$ is a local-in-time classical solution of the system \eqref{main_eq} with initial data \eqref{ini_main_eq} if and only if $(f,v)$ is a classical solution of the system
  \begin{subequations}\label{lag_eq}
    \begin{align}\label{lag_eq1}
      f(t,x)\frac{\pa \eta(t,x)}{\pa x} &= \rho_0(x), \\
    v'(t,x) + v(t,x) &=-\int_{\om(t)} \pa W(\eta(t,x) - y) \rho(t,y)dy \nonumber\\
      &= -\int_{\om_0} \pa W(\eta(t,x) - \eta(t,y))\rho_0(y)\,dy, \label{lag_eq2}
    \end{align}
  \end{subequations}
for $(t,x) \in (0,\infty) \times \om_0$, 
where we used the conservation of mass $\eqref{lag_eq1}$ to fix the domain of integration in the right hand side of the equation $\eqref{lag_eq2}$. Here $\{ \}'$ denotes the time derivative along the characteristic flow $\eta$. The system \eqref{lag_eq} is supplemented with the initial data
\eqv{\label{ini_lag_eq}
f_0 := f(0,x) = \rho_0(x),\quad  v_0:=v(0,x) = u_0(x).}
Since $(\rho_0,u_0)\in H^2(\Omega_0)\times H^3(\Omega_0)$ and we are in one dimension, the initial data $\rho_0$ and $u_0$ are continuous functions up to the boundary of the domain, i.e., $\rho_0,u_0\in \mc([a_0,b_0])$.

The problem \eqref{lag_eq}-\eqref{ini_lag_eq} has a unique local-in-time classical solution according to Theorem \ref{thm_local} in Appendix A. This solution can be extended to a maximal time of existence of the classical solution $[0,T)$. Since the characteristic flow $\eta(t,x)$ is a diffeomorphism for all $t\in[0,T)$ such that $\Omega(t)=\eta(t, \Omega_0)$, the Lagrangian change of variables can be inverted and the corresponding $(\rho,u)$ are a local-in-time classical solution of \eqref{main_eq}-\eqref{ini_main_eq} in the sense given in the introduction. As mentioned above, we will now obtain explicitly the formulas for the classical solutions of the system \eqref{main_eq} in Lagrangian variables.

Observe that the equation for the density $f(t,x)$ is decoupled from the equation of the velocity variable $v(t,x)$. We  first with the equation for $v(t,x)$, and come back to the expression for the deformation of the mass density $\pa_x\eta(t,x)$ later on. Since the second derivative of the potential $\pa_x^2 W(x) = -2 \delta_0(x) + 1$ and $v\in\mc^2$, we find
$$\begin{aligned}
v''(t,x) + v'(t,x) &= -\int_{\om_0} \pa^2 W(\eta(t,x) - \eta(t,y))\lt( v(t,x) - v(t,y) \rt) \rho_0(y)\,dy \cr
&= - v M_0 + \int_{\om_0} v(t,y) \rho_0(y)\,dy.
\end{aligned}$$
To evaluate the second term on the right hand side of the above equation, we multiply $\eqref{lag_eq2}$ by $\rho_0$ and integrate with respect to $x$ to get
\[
\frac{d}{dt}\int_{\om_0} v(t,x) \rho_0(x)\,dx = - \int_{\om_0} v(t,x) \rho_0(x)\,dx,
\]
due to $\pa_x W(-x) = -\pa_x W(x)$, thus, using the initial condition \eqref{ini_main_eq} we conclude 
\eqv{\label{mom}
\int_{\om_0} v(t,x) \rho_0(x)\,dx = e^{-t}\int_{\om_0} \rho_0(x)u_0(x)\,dx.
}
Set $M_1 := \int_{\om_0} \rho_0(x)u_0(x)\,dx$. Then we obtain that $v$ satisfies the following nonhomogeneous linear second-order differential equation:
\bq\label{eq_diff}
v'' + v' + M_0 v = M_1 e^{-t}, \quad t > 0, \quad v_0 = u_0.
\eq
We notice that the initial data $v'(0,x)=v'_0(x)$ are given through the equation $\eqref{lag_eq2}$ by
\eqv{\label{vt0}
v_0'(x) &= - v_0(x) - \int_{\om_0} \pa W(x-y) \rho_0(y)\,dy\cr
&= -u_0(x) - \int_{\om_0} (x-y) \rho_0(y)\,dy + \int_{\om_0} sgn(x-y) \rho_0(y)\,dy\cr
&= -u_0(x) - (x+1) M_0 + \int_{\om_0} y \rho_0(y)\,dy + 2\int_{-\infty}^x \rho_0(y)\,dy  \quad \mbox{for} \quad x \in \om_0.
}
Depending on the size of the initial mass $M_0$, as long as the solution exists, it satisfies:\\
$\bullet$ {\bf Case A} ($1 > 4M_0$):
\eqv{\label{sol_formA}
v(t,x) =  C_1 e^{\lambda_1  t} + C_2 e^{\lambda_2 t} + \frac{M_1}{M_0} e^{-t} ,}
$\bullet$ {\bf Case B} ($1 =4M_0$):
\eqv{\label{sol_formB}
v(t,x) =C_3 e^{-t/2} + C_4 t \,e^{-t/2} + \frac{M_1}{M_0} e^{-t},}
$\bullet$ {\bf Case C} ($1 <4M_0$):
\eqv{\label{sol_formC}
v(t,x) =C_5 e^{-t/2} \cos\lt( \frac{\sqrt{4M_0 - 1}}{2}t\rt) + C_6 e^{-t/2} \sin\lt( \frac{\sqrt{4M_0 - 1}}{2}t\rt) + \frac{M_1}{M_0} e^{-t},}
where $\lambda_1$, $\lambda_2$, and $C_i,i=1,\cdots,6$ are given by
\begin{subequations}\label{coeff}
\begin{align}
\lambda_1 &:= \frac{-1 + \sqrt{1 - 4M_0}}{2}, \quad \lambda_2 := \frac{-1 - \sqrt{1 - 4M_0}}{2}, \label{coeff_l}\\
C_1 &:= \frac{1}{\lambda_2 - \lambda_1}\lt( \lambda_2 v_0 - v_0' + \lambda_1\frac{M_1}{M_0}\rt), \quad 
C_2 := \frac{1}{\lambda_2 - \lambda_1}\lt( -\lambda_1 v_0 + v_0' - \lambda_2\frac{M_1}{M_0}\rt), \label{coeff_12}\\
 C_3 &:= v_0 - \frac{M_1}{M_0}, \quad C_4:= \frac{v_0}{2} + v_0' + \frac{M_1}{2M_0},\label{coeff_34}\\
C_5&:= v_0 - \frac{M_1}{M_0}, \quad \mbox{and} \quad C_6 = \frac{2}{\sqrt{4M_0 - 1}}\lt(v_0' + \frac{v_0}{2} + \frac{M_1}{2M_0}\rt).\label{coeff_56}
\end{align}
\end{subequations}
For abbreviation, we set 
$$
\Deltaa := 1 - 4M_0\qquad \mbox{and} \qquad \square := -\Deltaa \,.
$$

Our aim now is to compute an explicit form of $\pa_x v$, in each of the above cases. Note that for any of these cases, it follows from \eqref{eq_tra} that
\begin{equation}\label{epe}
\eta(t,x) = x + \int_0^t v(s,x)\,ds \quad \mbox{and} \quad \pa_x \eta(t,x) = 1 + \int_0^t \pa_x v(s,x)\,ds.
\end{equation}
$\bullet$ {\bf Case A }($1 - 4M_0 > 0$): A straightforward computation for \eqref{sol_formA} yields
\bq\label{case1}
\pa_x v = \pa_x C_1 e^{\lambda_1  t} + \pa_x C_2 e^{\lambda_2 t},
\eq
and thus
\[
\pa_x v_0 = \pa_x C_1 + \pa_x C_2 \quad \mbox{and} \quad \pa_x v_0' = \pa_x C_1 \lambda_1 + \pa_x C_2 \lambda_2.
\]
On the other hand, it follows from \eqref{ini_lag_eq} and \eqref{vt0} that
\[
\pa_x v_0 = \pa_x u_0 \quad \mbox{and} \quad \pa_x v_0' = -\pa_x u_0 - M_0 + 2 \rho_0,
\]
which implies
\eqv{\label{case1_est1}
\pa_x C_1 = \frac{1}{\sqrt{\Deltaa}} \lt( \lambda_1 \pa_x u_0 - M_0 + 2\rho_0\rt) \quad \mbox{and} \quad \pa_x C_2 = \frac{1}{\sqrt{\Deltaa}} \lt( M_0 - 2\rho_0 - \lambda_2 \pa_x u_0\rt).
}
Combining \eqref{epe} with \eqref{case1}, we get
\bq\label{lt_etaA}
\pa_x \eta = 1 + \frac{\pa_x C_1}{\lambda_1}\lt( e^{\lambda_1 t} - 1 \rt) + \frac{\pa_x C_2}{\lambda_2}\lt(e^{\lambda_2 t} - 1 \rt) = \frac{2\rho_0}{M_0} + \frac{\pa_x C_1}{\lambda_1}e^{\lambda_1 t} + \frac{\pa_x C_2}{\lambda_2}e^{\lambda_2 t},
\eq
\bq\label{etaA}
\eta= x+\frac{C_1}{\lambda_1}(e^{\lambda_1 t}-1)+\frac{C_2}{\lambda_2}(e^{\lambda_2 t}-1)-\frac{M_1}{M_0}(e^{-t}-1),
\eq
with $C_1, C_2$ are given by \eqref{coeff_12} whose derivatives are computed in \eqref{case1_est1} and $\lambda_1,\lambda_2$ given by \eqref{coeff_l}.

\medskip

\noindent$\bullet$ {\bf Case B} ($1 = 4M_0$): We use again the solution to \eqref{eq_diff} given in \eqref{sol_formB} together with the initial conditions to get
\eqv{\label{case2}
 \quad \pa_x v = \pa_x C_3 e^{-t/2} + \pa_x C_4 t \,e^{-t/2},
}
where $\pa_x C_3,\ \pa_x C_4$ satisfy
\bq\label{case2_est1}
\pa_x C_3 = \pa_x u_0 \quad \mbox{and} \quad \pa_x C_4 = -\frac12 \pa_x u_0 - \frac14 + 2\rho_0\,,
\eq
and so, by \eqref{epe}, we find
\bq\label{lt_etaB}
\pa_x \eta = 8\rho_0 - \lt(2\pa_x C_3 + 4\pa_x C_4 \rt)e^{-t/2} - 2\pa_x C_4 t \,e^{-t/2}
\eq
and
\bq\label{etaB}
 \eta =x+2C_3(1-e^{t/2})-2 C_4 t e^{-t/2}+4C_4(1-e^{-t/2})+\frac{M_1}{M_0}(1-e^{-t}).
\eq

\medskip

\noindent$\bullet$ {\bf Case C} ($1 - 4M_0 < 0$): It follows analogously from \eqref{sol_formC} that
\eqv{\label{case3}
\pa_x v(t,x) = \pa_x C_5(x) e^{-t/2} \coss + \pa_xC_6(x) e^{-t/2} \sins,
}
where $\pa_x C_5,\ \pa_x C_6$ satisfy
\eqv{\label{case3_est1}
\pa_x C_5 = \pa_x u_0 \quad \mbox{and} \quad \pa_x C_6=\frac{2}{\sqrt\square} \lt(-\frac12 \pa_x u_0 - M_0 + 2\rho_0\rt).
}
This yields
\eqv{\label{lt_etaC}
\pa_x \eta &= \frac{2\rho_0}{M_0} + \lt( \frac{2\square}{1 + \square}\rt)\lt(\frac{\pa_x C_5}{\sqrt\square} - \frac{\pa_x C_6}{\square}\rt)e^{-t/2}\sins \cr
&\quad - \lt( \frac{2\square}{1 + \square}\rt)\lt(\frac{\pa_x C_5}{\square} + \frac{\pa_x C_6}{\sqrt\square}\rt)e^{-t/2}\coss,
}
and
\eqv{\label{etaC}
\eta &=x+ \frac{2(\sqrt\square C_5-C_6)}{1+\square} e^{-t/2}\sins+
\frac{C_5+\sqrt\square C_6}{1+\square}\lr{2-2e^{-t/2}\coss}\cr
&\quad+\frac{M_1}{M_0}(1-e^{-t}).
}

Let us summarize our results up to this point. We have  derived the explicit forms of  velocity field being a local-in-time classical solution to \eqref{main_eq}. We have also obtained the expressions for the deformation of the mass density $\pa_x \eta$ leading to positive values of the Lagrangian density $f(t,x)$ for small enough time, since $\pa_x \eta(0,x)=1$, for $x\in \Omega_0$. Moreover, we have derived the explicit expression of the characteristic flow $\eta(t,x)$. We next want to find the maximal time of existence of these explicit solutions.

%
%
%
%

\section{Sharp critical thresholds} In this section, we study the critical thresholds leading to a sharp condition for the dichotomy between global-in-time existence and finite-time blow-up of classical solutions to \eqref{main_eq}. The argument is based on the observation that the local-in-time classical solution found in the previous section can be extended in time as long as the characteristics can be defined, i.e., there is no crossing of characteristics, or equivalently, the flow map $\eta(t,x)$ is a diffeomorphism, so $\pa_x\eta>0$. We will thus study the explicit forms of $\pa_x\eta$  obtained in cases A, B and C above. The form of the time derivative of $\pa_x\eta$ will enable to estimate the critical thresholds in the system \eqref{lag_eq} depending on the size of the initial mass $M_0$. 

We first notice that for all cases A, B, and C, the global-in-time classical solution, if it exists,  satisfies
$$
\pa_x \eta(0,x) = 1 \quad \mbox{and} \quad \lim_{t \to \infty}\pa_x \eta(t,x) = \frac{2\rho_0(x)}{M_0}>0 \mbox{ for all } x\in \Omega_0.
$$
Thus, if the infimum of $\pa_x \eta(t,x)$ is nonpositive, then it should be attained at $0<t^*<\infty$. Let us assume that there exist $t^*>0$ and $x^*\in \Omega_0$ satisfying 
\bq\label{inf}
\pa_x \eta(t^*, x^*) = \inf_{t >0, \, x \in \om_0}\pa_x \eta(t,x)\leq 0.
\eq
Then using \eqref{epe} we find the necessary condition
\[
\partial_t \pa_x \eta(t^*,x^*)= \pa_x v(t^*,x^*) = 0.
\]

\noindent $\bullet$ {\bf Case A} ($1 - 4M_0 > 0$): Since $\lambda_1,\lambda_2$ given by \eqref{coeff_l} are both negative, it is clear from  \eqref{lt_etaA} that $\pa_x C_1(x^*) \pa_x C_2(x^*) \neq 0$ in order to have the infimum inside the time interval $(0,\infty)$. From \eqref{case1}  we also get
\[
\pa_x v(t^*,x^*) = \pa_x C_1(x^*) e^{\lambda_1 t} \lt( 1 + \frac{\pa_x C_2(x^*)}{\pa_x C_1(x^*)} e^{-\sqrt{\Deltaa} t^*}\rt) = 0,
\]
for
\eqv{\label{tsA}
-\frac{\pa_x C_1(x^*)}{\pa_x C_2(x^*)} = e^{-\sqrt{\Deltaa} t^*}.
}
This implies
\bq\label{case1_est2}
0 < -\frac{\pa_x C_1(x^*)}{\pa_x C_2(x^*)} < 1.
\eq
Further, from \eqref{lt_etaA} and \eqref{tsA} we obtain
\[
\pa_x \eta(t^*,x^*) = \frac{2\rho_0(x^*)}{M_0} + \frac{\pa_x C_1(x^*)}{\lambda_1}e^{\lambda_1 t^*} + \frac{\pa_x C_2(x^*)}{\lambda_2}e^{\lambda_2 t^*} = \frac{2\rho_0(x^*)}{M_0} + \frac{\sqrt{\Deltaa}}{M_0}\pa_x C_2(x^*) e^{\lambda_2 t^*},
\]
thus necessarily  $\pa_x C_2(x^*) < 0$ due to \eqref{inf}. Further, if $\pa_x C_2(x^*) < 0$, then due to \eqref{case1_est2},  \eqref{case1_est1} and \eqref{coeff_12} we have $\pa_x u_0(x^*) < 0$ which is equivalent to $\pa_x C_1(x^*) + \pa_x C_2(x^*) < 0$. Thus we conclude that to have finite-time blow up there must exist $x^* \in \om_0$ such that
\[
\pa_x C_1(x^*) > 0, \quad  \pa_x C_2(x^*) < 0, \quad \pa_x u_0(x^*) < 0,
\]
and
\bq\label{condii}
2\rho_0(x^*) + \sqrt{\Deltaa}\pa_x C_2(x^*)\lt(-\frac{\pa_x C_1(x^*)}{\pa_x C_2(x^*)}\rt)^{{\frac{\lambda_2}{-\sqrt{\Deltaa} }}} \leq 0.
\eq
The above condition is not only necessary but also sufficient, more precisely we have the following proposition:
\begin{proposition}\label{prop_c1} Suppose $1 - 4M_0 > 0$. Then $\pa_x \eta(t,x)$ attains a non-positive value if and only if there exists a $x \in \om_0$ such that 
\[
\pa_x u_0(x) < 0, \quad M_0 - 2\rho_0(x) < \lambda_1 \pa_x u_0(x),
\]
and
\[
2\rho_0(x) \leq (\lambda_1 \pa_x u_0(x) - M_0 + 2\rho_0(x))^{-\lambda_2/\sqrt{\Deltaa}}(\lambda_2 \pa_x u_0(x) - M_0 + 2\rho_0(x))^{\lambda_1/\sqrt{\Deltaa}}.
\]
\end{proposition}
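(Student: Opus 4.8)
The plan is to recognise that the three inequalities in the statement are just a rewriting, through \eqref{case1_est1} and the identity $\partial_x v_0=\partial_x C_1+\partial_x C_2=\partial_x u_0$, of the conditions $\partial_x C_1(x)>0$, $\partial_x C_2(x)<0$, $\partial_x u_0(x)<0$ together with \eqref{condii} that were isolated in the discussion preceding the proposition. Accordingly, the proof splits into a necessity part, essentially already carried out above, and a sufficiency part, where the blow-up time must be produced explicitly. Throughout, fix the abbreviations $A:=\lambda_1\partial_x u_0-M_0+2\rho_0=\sqrt{\Deltaa}\,\partial_x C_1$ and $B:=\lambda_2\partial_x u_0-M_0+2\rho_0=-\sqrt{\Deltaa}\,\partial_x C_2$, and record the elementary identity $1+\lambda_2/\sqrt{\Deltaa}=\lambda_1/\sqrt{\Deltaa}$, which is immediate from \eqref{coeff_l}.

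For necessity, suppose $\partial_x\eta$ takes a non-positive value at some $(t_0,x_0)\in(0,\infty)\times\om_0$. For this fixed $x_0$ the map $t\mapsto\partial_x\eta(t,x_0)$ equals $1$ at $t=0$ and tends to $2\rho_0(x_0)/M_0>0$ as $t\to\infty$, so its infimum on $(0,\infty)$ is non-positive and is attained at some $t^*\in(0,\infty)$, where $\partial_x v(t^*,x_0)=\partial_t\partial_x\eta(t^*,x_0)=0$. The chain of implications established above, applied with $x^*=x_0$, then gives $\partial_x C_1(x_0)>0$, $\partial_x C_2(x_0)<0$, $\partial_x u_0(x_0)<0$ and \eqref{condii}. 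By \eqref{case1_est1}, $\partial_x C_1(x_0)>0$ reads $M_0-2\rho_0(x_0)<\lambda_1\partial_x u_0(x_0)$, so the first two inequalities of the statement hold and $A,B>0$ at $x_0$. Since $-\partial_x C_1(x_0)/\partial_x C_2(x_0)=A/B$, condition \eqref{condii}, namely $2\rho_0(x_0)\le B\,(A/B)^{-\lambda_2/\sqrt{\Deltaa}}$, becomes $2\rho_0(x_0)\le A^{-\lambda_2/\sqrt{\Deltaa}}B^{\,1+\lambda_2/\sqrt{\Deltaa}}=A^{-\lambda_2/\sqrt{\Deltaa}}B^{\lambda_1/\sqrt{\Deltaa}}$, which is exactly the third inequality.

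For sufficiency, fix $x$ satisfying the three inequalities and put $x^*=x$. By \eqref{case1_est1} the second one gives $\partial_x C_1(x^*)>0$; since $\lambda_1>\lambda_2$ and $\partial_x u_0(x^*)<0$ one has $M_0-2\rho_0(x^*)<\lambda_1\partial_x u_0(x^*)<\lambda_2\partial_x u_0(x^*)$, hence $\partial_x C_2(x^*)<0$. As $\partial_x C_1(x^*)$ and $\partial_x C_2(x^*)$ are of opposite sign and $\partial_x C_1(x^*)+\partial_x C_2(x^*)=\partial_x u_0(x^*)<0$, the quotient $-\partial_x C_1(x^*)/\partial_x C_2(x^*)$ lies in $(0,1)$, so \eqref{tsA} defines a unique $t^*\in(0,\infty)$. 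There $\partial_x v(t^*,x^*)=0$ by \eqref{case1}, and differentiating \eqref{case1} once more and using $\partial_x C_1(x^*)e^{\lambda_1 t^*}=-\partial_x C_2(x^*)e^{\lambda_2 t^*}$ yields $\partial_t^2\partial_x\eta(t^*,x^*)=-\sqrt{\Deltaa}\,\partial_x C_2(x^*)e^{\lambda_2 t^*}>0$; since $t^*$ is the only zero of $\partial_x v(\cdot,x^*)$, it is the global minimiser in time of $\partial_x\eta(\cdot,x^*)$ on $(0,\infty)$. Raising \eqref{tsA} to the power $-\lambda_2/\sqrt{\Deltaa}$ gives $e^{\lambda_2 t^*}=(-\partial_x C_1(x^*)/\partial_x C_2(x^*))^{-\lambda_2/\sqrt{\Deltaa}}$, so the simplified expression for $\partial_x\eta(t^*,x^*)$ recorded just after \eqref{tsA} becomes
\[
\partial_x\eta(t^*,x^*)=\frac1{M_0}\Bigl(2\rho_0(x^*)+\sqrt{\Deltaa}\,\partial_x C_2(x^*)\bigl(-\tfrac{\partial_x C_1(x^*)}{\partial_x C_2(x^*)}\bigr)^{-\lambda_2/\sqrt{\Deltaa}}\Bigr);
\]
by the rewriting used in the necessity part, the bracket is $\le 0$ precisely when the third inequality of the statement holds, so $\partial_x\eta$ attains a non-positive value.

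The main difficulty here is bookkeeping rather than conceptual. The delicate points are: checking that the first two inequalities already force $\partial_x C_2(x^*)<0$ and $A,B>0$, so that all the fractional powers and the ratio $A/B$ are legitimate; verifying that the $t^*$ produced by \eqref{tsA} genuinely lies in the open interval $(0,\infty)$ (which uses $\partial_x u_0(x^*)<0$ for the bound $<1$); and—the most error-prone step—confirming that \eqref{condii} and the third displayed inequality coincide, which hinges on the identity $1+\lambda_2/\sqrt{\Deltaa}=\lambda_1/\sqrt{\Deltaa}$ and on the correct identification of $A=\lambda_1\partial_x u_0-M_0+2\rho_0$ and $B=\lambda_2\partial_x u_0-M_0+2\rho_0$ as the two bases.
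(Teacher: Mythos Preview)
Your proof is correct and follows the same route as the paper: both recognise that the three inequalities of Proposition~\ref{prop_c1} are precisely the conditions $\partial_x C_1>0$, $\partial_x C_2<0$, $\partial_x u_0<0$ together with \eqref{condii}, rewritten via \eqref{case1_est1} and the identity $1+\lambda_2/\sqrt{\Deltaa}=\lambda_1/\sqrt{\Deltaa}$. Your argument is in fact more complete than the paper's, which leaves the sufficiency direction implicit; you explicitly produce the critical time $t^*$ from \eqref{tsA}, verify it is the unique global minimiser via the second-derivative test, and evaluate $\partial_x\eta(t^*,x^*)$ there, whereas the paper relies on the preceding discussion having already identified the minimum value.
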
 
\begin{proof} Note that $M_0 - 2\rho_0(x) < \lambda_1 \pa_x u_0(x)$ is equivalent to $\pa_x C_1(x) > 0$ and $\pa_x C_2(x) < 0$ due to $\pa_x u_0(x) < 0$. Finally, it follows from \eqref{case1_est1} and \eqref{condii} that
\[
2\rho_0(x) \leq (\lambda_1 \pa_x u_0(x) - M_0 + 2\rho_0(x))^{-\lambda_2/\sqrt{\Deltaa}}(\lambda_2 \pa_x u_0(x) - M_0 + 2\rho_0(x))^{\lambda_1/\sqrt{\Deltaa}}.
\]
\end{proof}

\noindent $\bullet$ {\bf Case B} ($1 = 4M_0$): In this case, $\pa_x\eta$ is given by \eqref{lt_etaB} and \eqref{case2_est1}.
We again want to find a point $x^*$ which makes $\pa_x \eta$ nonpositive at some time $t=t^*$. Let us look for the values $t^*, x^*$ satisfying  $\pa_x v(t^*,x^*) = 0$, from \eqref{case2}, we have
\[
\pa_x v(t^*,x^*) = \pa_x C_3(x^*) e^{-t^*/2} + \pa_x C_4(x^*) t^* e^{-t^*/2}=0, \quad \mbox{i.e.,} \quad t^* = -\frac{\pa_x C_3(x^*)}{\pa_x C_4(x^*)}.
\]
Since we look for $t^*>0$ we must have $-\frac{\pa_x C_3(x^*)}{\pa_x C_4(x^*)} > 0$. On the other hand, by plugging $t^*$ and $x^*$ into \eqref{lt_etaB}, we get
\bq\label{neww}
\pa_x \eta(t^*,x^*) = 8\rho_0(x^*) - 4\pa_x C_4(x^*) e^{-t^*/2}.
\eq
Thus $\pa_x \eta(t^*,x^*)$ can be nonpositive if and only if 
\[
\pa_x C_4(x^*) > 0, \quad \pa_x C_3(x^*) < 0, \quad \mbox{and} \quad 2\ln \lt(\frac{2\rho_0(x^*)}{\pa_x C_4(x^*)} \rt) \leq \frac{\pa_x C_3(x^*)}{\pa_x C_4(x^*)}.
\]
Summarizing the above estimate together with \eqref{case2_est1}, we have the following proposition:
\begin{proposition} Suppose $1 = 4M_0$. Then $\pa_x \eta(t,x)$ attains a nonpositive value if and only if there exists a $x \in \om_0$ such that 
\[
\pa_x u_0(x) < \min\lt\{0, 4\rho_0(x) - \frac12\rt\},
\]
and
\bq\label{condii2}
\ln \lt(\frac{8\rho_0(x)}{8\rho_0(x) - 2\pa_x u_0(x) - 1}\rt) \leq \frac{2\pa_x u_0(x)}{8\rho_0(x) - 2\pa_x u_0(x) - 1}.
\eq
\end{proposition}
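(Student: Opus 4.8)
The plan is to read off everything from the explicit formula \eqref{lt_etaB} for $\pa_x\eta$ in Case B, together with the identities \eqref{case2} and \eqref{case2_est1}, and to reduce the claim to an elementary one-variable analysis of the map $t \mapsto \pa_x\eta(t,x)$ for each fixed $x\in\om_0$. This mirrors the treatment of Case A in Proposition~\ref{prop_c1}, but the computations are simpler because the characteristic exponents coincide.

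First I would record the boundary behaviour of $\pa_x\eta(\cdot,x)$: one has $\pa_x\eta(0,x)=1$ and, since $M_0=\tfrac14$, $\lim_{t\to\infty}\pa_x\eta(t,x)=8\rho_0(x)>0$ for $x\in\om_0$ (the strict positivity uses that $\om_0$ is the interior of the support). Hence, for fixed $x$, if $\inf_{t>0}\pa_x\eta(t,x)\le 0$ then this infimum is attained at some $t^*\in(0,\infty)$ with $\partial_t\pa_x\eta(t^*,x)=\pa_x v(t^*,x)=0$. By \eqref{case2}, $\pa_x v(t,x)=e^{-t/2}\lt(\pa_x C_3(x)+t\,\pa_x C_4(x)\rt)$ has at most one positive zero, $t^*=-\pa_x C_3(x)/\pa_x C_4(x)$, which forces $\pa_x C_3(x)$ and $\pa_x C_4(x)$ to have opposite signs; inspecting the sign of $\pa_x v$ on either side of $t^*$ shows that $t^*$ is a \emph{minimum} of $\pa_x\eta(\cdot,x)$ precisely when $\pa_x C_3(x)<0<\pa_x C_4(x)$ (in the other sign pattern $t^*$ is a maximum and $\inf_{t\ge 0}\pa_x\eta(t,x)=\min\{1,8\rho_0(x)\}>0$, while if the two coefficients have the same sign the map is monotone and again stays positive). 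Substituting $t^*$ into \eqref{lt_etaB} gives exactly \eqref{neww}, $\pa_x\eta(t^*,x)=8\rho_0(x)-4\pa_x C_4(x)e^{-t^*/2}$, so the minimum is nonpositive iff $e^{-t^*/2}\ge 2\rho_0(x)/\pa_x C_4(x)$, i.e.
\[
2\ln\!\lt(\frac{2\rho_0(x)}{\pa_x C_4(x)}\rt)\le \frac{\pa_x C_3(x)}{\pa_x C_4(x)}.
\]

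It then remains only to rewrite these conditions in terms of the data via \eqref{case2_est1}: $\pa_x C_3(x)=\pa_x u_0(x)$ and $\pa_x C_4(x)=\tfrac14\bigl(8\rho_0(x)-2\pa_x u_0(x)-1\bigr)$. The requirements $\pa_x C_3(x)<0$ and $\pa_x C_4(x)>0$ become $\pa_x u_0(x)<\min\{0,4\rho_0(x)-\tfrac12\}$, and the displayed logarithmic inequality, after dividing by $2$ and simplifying the ratios, becomes \eqref{condii2}. For the converse I would simply observe that, given an $x$ satisfying the two stated conditions, the point $(t^*,x)$ with $t^*=-\pa_x C_3(x)/\pa_x C_4(x)>0$ realizes a nonpositive value of $\pa_x\eta$ by the same computation, so it is indeed attained. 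I do not expect a genuine obstacle: all formulas are already in hand and the argument is a short calculus exercise; the only points needing care are (i) justifying that a nonpositive infimum is attained at an interior critical time, which follows from the boundary behaviour above, and (ii) checking that the candidate $t^*$ is a minimum and not a maximum, which pins down the sign pattern $\pa_x C_3(x)<0<\pa_x C_4(x)$ and hence the direction of all the inequalities.
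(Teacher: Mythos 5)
Your proposal is correct and follows essentially the same route as the paper: locate the interior critical time $t^* = -\pa_x C_3(x)/\pa_x C_4(x)$ of $\pa_x\eta(\cdot,x)$, substitute into \eqref{lt_etaB} to obtain \eqref{neww}, read off the sign constraints on $\pa_x C_3,\pa_x C_4$, and then translate via \eqref{case2_est1} to get the stated inequalities. The only difference is that you spell out more explicitly why a nonpositive infimum must be attained at an interior critical time and why $t^*$ must be a minimum rather than a maximum; the paper leaves these points implicit.
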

\begin{proof} Since $\pa_x C_3(x) =\pa_x u_0(x) < 0$ and $\pa_x C_4(x) > 0$, we infer
\[
\pa_x u_0(x) < \min\lt\{0, 4\rho_0(x) - \frac12\rt\}.
\]
The condition \eqref{condii2} just follows from \eqref{neww}, since $t^*>0$.
\end{proof}

\noindent $\bullet$ {\bf Case C} ($1 - 4M_0 < 0$): In this case, $\pa_x v$ is given by \eqref{case3}.
Let us look for the values $t^*,x^*$ satisfying $\pa_x v(t^*,x^*) = 0$, we have
\bq\label{est_c3_2}
\cosss = -\frac{\pa_x C_6(x^*)}{\pa_x C_5(x^*)} \sinss, \quad \mbox{i.e.,} \quad  -\frac{\pa_x C_5(x^*)}{\pa_x C_6(x^*)} = \tanss.
\eq
This gives 
\eqv{\label{est_c3_1}
&\pa_x \eta(t^*,x^*) \\
&\quad= \frac{2\rho_0(x^*)}{M_0} + \lt( \frac{2\sqrt\square}{1 + \square}\rt)\lt(\frac{(\pa_x C_5(x^*))^2 + (\pa_x C_6(x^*))^2}{\pa_x C_5(x^*)}\rt)e^{-t^*/2}\sinss,
}
due to \eqref{lt_etaC}. Note that the second term in the right hand side of the equality \eqref{est_c3_1} has a damped oscillatory behavior as a function of $t^*$. This implies that in order to get the minimum value of $\pa_x \eta(t^*,x^*)$, it is enough to find the point $x^* \in \om_0$ and the smallest time $t^* > 0$ satisfying \eqref{est_c3_2}, such that the sign of the second term in  \eqref{est_c3_1} is negative, i.e. $\sin(\sqrt{\square}\, t^*/2) \pa_x C_5(x^*) < 0$. Observe that for each $x^*\in \Omega_0$, there is an increasing sequence of allowed positive $t^*$ due to condition \eqref{est_c3_2}. For this, we consider the following two cases:\\

\noindent {\bf Subcase C.1} $\pa_x C_5(x^*)\pa_x C_6(x^*)< 0$: It follows from \eqref{est_c3_2} that the first $t^* > 0$ satisfying \eqref{est_c3_2} appears in the interval $(0, \pi/\sqrt\square)$. This yields that  $\sin(\sqrt{\square}\, t^*/2) > 0$, therefore we can further distinguish two different cases:\\

\noindent {\bf Subcase C.1.i} If in addition $\pa_x C_5 <0$, it is possible that the first $t^* > 0$ satisfying \eqref{est_c3_2} leads to a negative value of \eqref{est_c3_1}. We can write its form in an explicit way; due to \eqref{est_c3_2} we have
\[
\sin\lt(\frac{\sqrt\square}{2} t^*\rt) = - \frac{\pa_x C_5(x^*)}{\sqrt{(\pa_x C_5(x^*))^2 + (\pa_x C_6(x^*))^2}} > 0.
\]
Plugging this into \eqref{est_c3_1}, we get
\[
\pa_x \eta(t^*,x^*) = \frac{2\rho_0(x^*)}{M_0} - \lt( \frac{2\sqrt\square}{1 + \square}\rt)\sqrt{(\pa_x C_5(x^*))^2 + (\pa_x C_6(x^*))^2}\,e^{-t^*/2}.
\]
Then we again use the relation \eqref{est_c3_2} to find
\[
\pa_x \eta(t^*,x^*) = \frac{2\rho_0(x^*)}{M_0} - C_7(x^*)\exp\lt( \frac{C_8(x^*)}{\sqrt\square}\rt),
\]
where $C_7$ and $C_8$ are given by
\bq\label{case3_c2}
C_7(x) := \lt( \frac{2\sqrt\square}{1 + \square}\rt)\sqrt{(\pa_x C_5(x))^2 + (\pa_x C_6(x))^2}  \mbox{ and }  C_8(x) := \arctan\lt(\frac{\pa_x C_5(x)}{\pa_x C_6(x)}\rt).
\eq

\noindent {\bf Subcase C.1.ii} If in addition $\pa_x C_5 >0$, then the first $t^* > 0$ satisfying \eqref{est_c3_2} leads to a positive value of \eqref{est_c3_1}, but the next $t^*$ might lead to a negative value. This one occurs at 
$$
t_1^*=t^*+\frac{2\pi}{\sqrt{\square}}\in ( 2\pi/\sqrt\square, 3\pi/\sqrt\square),
$$ 
for which $\sin(\sqrt{\square}\, t_1^*/2) <0$, however its form is still the same
\[
\sin\lt(\frac{\sqrt\square}{2} t_1^*\rt) = - \frac{\pa_x C_5(x^*)}{\sqrt{(\pa_x C_5(x^*))^2 + (\pa_x C_6(x^*))^2}} < 0,
\]
and thus
\[
\pa_x \eta(t_1^*,x^*) = \frac{2\rho_0(x^*)}{M_0} - C_7(x^*) \exp\lt( \frac{1}{\sqrt\square}\lt(C_8(x^*) - \pi\rt)\rt).
\]
{\bf Subcase C.2} $\pa_x C_5(x^*)\pa_x C_6(x^*) > 0$: In this case, the first $t^* > 0$ satisfying \eqref{est_c3_2} is later, namely $t^* \in ( \pi/\sqrt\square, 2\pi/\sqrt\square)$, however this gives again the positive value of $\sin(\sqrt{\square}\, t^*/2) >0$. Therefore, we can further distinguish similar two cases as in {\bf C.1}:\\

\noindent {\bf Subcase C.2.i} If in addition $\pa_x C_5 <0$, then the minimum value 
$\pa_x \eta(t^*,x^*)$ can be written in the following way
\[
\pa_x \eta(t^*,x^*) = \frac{2\rho_0(x^*)}{M_0} - C_7(x^*) \exp\lt( \frac{1}{\sqrt\square}\lt(C_8(x^*) - \pi\rt)\rt).
\]
Note that since $C_8(x^*) > 0$ and $\frac{\sqrt{\square}}{2}t^*> 0$, one has to take $\frac{\sqrt{\square}}{2}t^*= -C_8(x^*)+\pi$.

\noindent {\bf Subcase C.2.ii} If in addition $\pa_x C_5 >0$, then the minimum value is attained in the next possible time according to \eqref{est_c3_2} given by
$$
t_1^*=t^*+\frac{2\pi}{\sqrt{\square}}\in ( 3\pi/\sqrt\square, 4\pi/\sqrt\square),
$$ 
so, the smallest value is given by 
\[
\pa_x \eta(t_1^*,x^*) = \frac{2\rho_0(x^*)}{M_0} - C_7(x^*) \exp\lt( \frac{1}{\sqrt\square}\lt(C_8(x^*) - 2\pi\rt)\rt).
\]
All of these sub-cases for $1 - 4M_0 < 0$ can be summarized in the following result. 
\begin{proposition}\label{prop_c3} Suppose $1 - 4M_0 < 0$. Then $\pa_x \eta(t,x)$ has a nonpositive value if and only if there exists a point $x \in \mms_1 \cup \mms_2\cup\mms_3\cup\mms_4$ where $\mms_i,i=1,...,4$ are given by
\begin{align}\label{condi_c3}
\begin{aligned}
\mms_1 \!\!&:=\!\! \lt\{ x \in \om_0\!:\!\pa_x C_5(x) < 0, \pa_x C_6(x) >0, \frac{2\rho_0(x)}{M_0} -  C_7(x)\exp\!\lt( \frac{C_8(x)}{\sqrt\square}\rt) \!\!\leq 0 \rt\},\cr
\mms_2 \!\!&:=\!\! \lt\{ x \in \om_0\!:\! \pa_x C_5(x) > 0, \pa_x C_6(x) <0, \frac{2\rho_0(x)}{M_0} - C_7(x) \exp\!\lt( \frac{C_8(x) - \pi}{\sqrt\square}\rt) \!\!\leq 0 \rt\},\cr
\mms_3 \!\!&:= \!\!\lt\{ x \in \om_0\!:\! \pa_x C_5(x) < 0, \pa_x C_6(x) <0, \frac{2\rho_0(x)}{M_0} - C_7(x) \exp\!\lt( \frac{C_8(x) - \pi}{\sqrt\square}\rt) \!\!\leq 0 \rt\},\cr
\mms_4 \!\!&:=\!\! \lt\{ x \in \om_0\!:\! \pa_x C_5(x) > 0, \pa_x C_6(x) >0, \frac{2\rho_0(x)}{M_0} - C_7(x) \exp\!\lt( \frac{C_8(x) - 2\pi}{\sqrt\square}\rt) \!\!\leq 0 \rt\}
\end{aligned}
\end{align}
respectively. Here $\pa_x C_i,i=5,6,7,8$ are given in \eqref{case3_est1} and \eqref{case3_c2}.
\end{proposition}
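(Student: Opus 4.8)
The plan is to assemble the case-by-case analysis of Subcases C.1.i--C.2.ii above into a single statement by verifying that, for each fixed $x\in\om_0$, it correctly identifies the infimum of $t\mapsto\pa_x\eta(t,x)$ over $t>0$.

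First I would record the reduction already used in cases A and B. For fixed $x$ the map $t\mapsto\pa_x\eta(t,x)$ is smooth on $[0,\infty)$, equals $1$ at $t=0$, and by \eqref{lt_etaC} tends to $2\rho_0(x)/M_0>0$ as $t\to\infty$; hence $\pa_x\eta$ assumes a nonpositive value somewhere if and only if, for some $x\in\om_0$, its infimum over $t>0$ is nonpositive, and such an infimum is a minimum attained at an interior point $t^*$ with $\pa_x v(t^*,x)=0$, as in \eqref{inf} (indeed, since $f(0)=1$ and $f(\infty)=2\rho_0(x)/M_0$ are both positive, a nonpositive value of $f=\pa_x\eta(\cdot,x)$ forces an interior global minimum). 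From \eqref{case3} the positive zeros of $\pa_x v(\cdot,x)$ form --- unless $\pa_x C_5(x)=\pa_x C_6(x)=0$, in which case $\pa_x v\equiv0$ and $\pa_x\eta\equiv1$ --- an increasing sequence $\tau_1(x)<\tau_2(x)<\cdots$ with consecutive gaps $2\pi/\sqrt\square$, between which $\pa_x v$ is strictly monotone; so the only candidates for a minimum of $\pa_x\eta(\cdot,x)$ on $[0,\infty)$ are $t=0$ and the $\tau_k(x)$, whence $\inf_{t\ge0}\pa_x\eta(t,x)=\min\{1,\ \inf_k\pa_x\eta(\tau_k(x),x)\}$.

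Next I would evaluate $\pa_x\eta$ at the critical times. Substituting \eqref{est_c3_2} into \eqref{lt_etaC} gives \eqref{est_c3_1}, and at $\tau_k(x)$ one has $\sin(\tfrac{\sqrt\square}{2}\tau_k)=\mp\pa_x C_5(x)/\sqrt{(\pa_x C_5(x))^2+(\pa_x C_6(x))^2}$, so $\pa_x\eta(\tau_k(x),x)=2\rho_0(x)/M_0\mp C_7(x)e^{-\tau_k(x)/2}$ with $C_7$ as in \eqref{case3_c2}, the sign alternating with $k$ because $\sin(\tfrac{\sqrt\square}{2}\tau_{k+1})=-\sin(\tfrac{\sqrt\square}{2}\tau_k)$. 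Since $e^{-t/2}$ is strictly decreasing, $\inf_k\pa_x\eta(\tau_k(x),x)=2\rho_0(x)/M_0-C_7(x)e^{-\tau(x)/2}$, where $\tau(x)$ denotes the first $\tau_k(x)$ with $\sin(\tfrac{\sqrt\square}{2}\tau_k)\,\pa_x C_5(x)<0$. As $1>0$, this shows $\pa_x\eta$ attains a nonpositive value if and only if $2\rho_0(x)/M_0\le C_7(x)e^{-\tau(x)/2}$ for some $x\in\om_0$, and it remains only to locate $\tau(x)$.

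This last step is precisely the bookkeeping of Subcases C.1.i--C.2.ii. The sign of $\pa_x C_5(x)\pa_x C_6(x)$ places $\tau_1(x)$ in $(0,\pi/\sqrt\square)$ or in $(\pi/\sqrt\square,2\pi/\sqrt\square)$, in either case with $\sin(\tfrac{\sqrt\square}{2}\tau_1)>0$; the sign of $\pa_x C_5(x)$ then forces $\tau(x)=\tau_1(x)$ when $\pa_x C_5(x)<0$ and $\tau(x)=\tau_1(x)+2\pi/\sqrt\square$ when $\pa_x C_5(x)>0$; and in the four resulting cases $\tfrac{\sqrt\square}{2}\tau(x)$ equals $-C_8(x)$, $\pi-C_8(x)$, $\pi-C_8(x)$, $2\pi-C_8(x)$ respectively, with $C_8(x)=\arctan(\pa_x C_5(x)/\pa_x C_6(x))$ as in \eqref{case3_c2}. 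Inserting $e^{-\tau(x)/2}$ into $2\rho_0(x)/M_0\le C_7(x)e^{-\tau(x)/2}$ then reproduces exactly the inequalities defining $\mms_1,\mms_2,\mms_3,\mms_4$ in \eqref{condi_c3}, while the four sign patterns give the sign constraints there; the ``if'' direction follows by exhibiting the corresponding $\tau(x)$, the ``only if'' direction from the infimum computation above. The one point needing extra care is the degenerate configurations $\pa_x C_5(x)=0$ or $\pa_x C_6(x)=0$, where \eqref{est_c3_2}--\eqref{est_c3_1} degenerate; these I would treat by evaluating \eqref{lt_etaC} directly at the first relevant critical time ($t=2\pi/\sqrt\square$, resp.\ $t=\pi/\sqrt\square$), checking that such $x$ satisfy the inequality of the appropriate $\mms_i$ with the natural conventions $C_8(x)=0$, resp.\ $C_8(x)=\pm\pi/2$, and lie on its boundary. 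The main obstacle is thus purely organizational --- tracking the four sign cases, the three intervals, and the $\arctan$ normalization consistently --- rather than analytic.
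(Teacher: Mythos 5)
Your proposal is correct and follows essentially the same route as the paper: the paper treats the proposition as a summary of the Subcases C.1.i--C.2.ii discussion, identifying the first critical time at which the damped oscillatory term in \eqref{est_c3_1} is negative and then reading off the four inequality conditions. Your write-up is somewhat tighter than the paper's in a couple of places --- in particular, you explicitly justify that a nonpositive value of $\pa_x\eta(\cdot,x)$ forces an interior minimum (since $\pa_x\eta(0,x)=1$ and $\pa_x\eta\to 2\rho_0(x)/M_0>0$), you make explicit the arithmetic structure of the critical times with gap $2\pi/\sqrt\square$ and the alternating sign of $\pm C_7 e^{-t/2}$, and you flag the degenerate configurations $\pa_x C_5=0$ or $\pa_x C_6=0$, where such $x$ lie only on the boundary of the sets $\mms_i$ (a measure-zero technicality the paper glosses over); none of these changes the substance, so this counts as the same proof with the gaps filled in.
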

As a direct consequence of Propositions \ref{prop_c1} -- \ref{prop_c3}, we have the following sharp critical thresholds for the system \eqref{main_eq}.

\begin{theorem}\label{Th:main} 
Assume that $(f,v)$ is a classical solution to the system \eqref{lag_eq} with initial data \eqref{ini_lag_eq}, then:

\noindent {\bf Case A:} If $1 - 4M_0 > 0$, the solution blows up in finite time if and only if there exists a $x^* \in \om_0$ such that
\[
\pa_x u_0(x) < 0, \quad M_0 - 2\rho_0(x) < \lambda_1 \pa_x u_0(x),
\]
and
\[
2\rho_0(x) \leq (\lambda_1 \pa_x u_0(x) - M_0 + 2\rho_0(x))^{-\lambda_2/\sqrt{\Deltaa}}(\lambda_2 \pa_x u_0(x) - M_0 + 2\rho_0(x))^{\lambda_1/\sqrt{\Deltaa}}.
\]
{\bf Case B:} If $1 - 4M_0 = 0$, the solution blows up in finite time if and only if there exists a $x^* \in \om_0$ such that
\[
\pa_x u_0(x) < \min\lt\{0, 4\rho_0(x) - \frac12\rt\},
\]
and
\[
\ln \lt(\frac{8\rho_0(x^*)}{8\rho_0(x^*) - 2\pa_x u_0(x^*) - 1}\rt) \leq \frac{2\pa_x u_0(x^*)}{8\rho_0(x^*) - 2\pa_x u_0(x^*) - 1}.
\]
{\bf Case C:} If $1 - 4M_0 < 0$, the solution blows up in finite time if and only if there exists a $x^* \in \mms_1 \cup \mms_2\cup\mms_3\cup\mms_4$ where $\mms_i,i=1,...,4$ are given in \eqref{condi_c3} and with $C_i(x), i=5,\cdots,8$ given by
\[
\pa_x C_5(x) = \pa_x u_0(x), \quad \pa_x C_6(x) = \frac{2}{\sqrt\square} \lt(-\frac12 \pa_x u_0(x) - M_0(x) + 2\rho_0(x)\rt),
\]
\[
C_7(x) = \lt( \frac{2\sqrt\square}{1 + \square}\rt)\sqrt{\frac{1 + \square}{\square}(\pa_x u_0(x))^2 + \frac{4}{\square}\lt(2\rho_0(x) - M_0\rt)\lt(2\rho_0(x) - M_0 - \pa_x u_0(x)\rt)},
\]
and
\[
C_8(x) = \arctan \lt( \frac{\sqrt{\square} \pa_x u_0(x)}{4\rho_0(x) - 2M_0 - \pa_x u_0(x)}\rt).
\]
Moreover, for all cases, if there is no finite-time blow-up, then the classical solution $(f,v)$ exists globally in time.
\end{theorem}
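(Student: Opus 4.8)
The plan is to obtain Theorem \ref{Th:main} by combining Propositions \ref{prop_c1}--\ref{prop_c3} with a continuation argument that identifies finite-time breakdown with the vanishing of the deformation $\pa_x\eta$. First I would recall from Theorem \ref{thm_local} and the constructions of Section \ref{Sec:2} that \eqref{lag_eq}--\eqref{ini_lag_eq} has a unique classical solution on a maximal interval $[0,T)$, on which the explicit formulas of Section \ref{Sec:2} for $v$, $\pa_x v$, $\pa_x^2 v$, $\eta$ and $\pa_x\eta$ hold. The crucial point is that each of these quantities is a fixed expression built from $\rho_0,\pa_x\rho_0,u_0,\pa_x u_0,\pa_x^2 u_0$ -- all continuous and bounded on $[a_0,b_0]$ since $\rho_0\in H^2(\om_0)$ and $u_0\in H^3(\om_0)$ -- multiplied by functions of $t$ that are smooth and bounded on bounded intervals. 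Consequently $v,\pa_x v,\pa_x^2 v,\eta,\pa_x\eta$ stay bounded on every $[0,T']\times[a_0,b_0]$, and $f=\rho_0/\pa_x\eta$ together with $\pa_x f=\big(\pa_x\rho_0\,\pa_x\eta-\rho_0\,\pa_x^2\eta\big)/(\pa_x\eta)^2$ remain bounded and continuous there as soon as $\pa_x\eta$ is bounded below by a positive constant. Hence the only obstruction to continuing the solution is $\inf_{x\in\om_0}\pa_x\eta(t,x)\to 0$, i.e. the loss of the diffeomorphism property of $\eta(t,\cdot)$.

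From this I would extract the dichotomy. Since $\pa_x\eta(0,x)=1$ and $\pa_x\eta(t,x)\to 2\rho_0(x)/M_0>0$ as $t\to\infty$ for $x\in\om_0$, and $t\mapsto\pa_x\eta(t,x)$ is continuous and given for all $t\ge0$ by the explicit formula, it follows that $T<\infty$ if and only if there exist $t^*\in(0,\infty)$ and $x^*\in\om_0$ with $\pa_x\eta(t^*,x^*)\le0$: if the explicit function $\pa_x\eta(\cdot,x^*)$ takes a non-positive value then by the intermediate value theorem it has a first zero $t_0\le t^*$, which forces $T\le t_0<\infty$; conversely, if $\pa_x\eta>0$ on $[0,\infty)\times\om_0$ then on every compact $[0,T']\times[a_0,b_0]$ it is bounded below by a positive constant (using continuity and $\rho_0>0$ on $\om_0$), so the solution extends to all times and $T=\infty$. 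In particular every classical solution is either global or breaks down in finite time, which is the last assertion of the theorem; moreover ``finite-time breakdown'' has now been reduced to the statement that $\pa_x\eta$ attains a non-positive value somewhere on $(0,\infty)\times\om_0$.

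It then only remains to translate this statement in the three regimes. For Case A it is exactly Proposition \ref{prop_c1}, and for Case B it is the corresponding proposition (its condition \eqref{condii2} being what one obtains by inserting $\pa_x C_3=\pa_x u_0$, $\pa_x C_4=-\tfrac12\pa_x u_0-\tfrac14+2\rho_0$ and $t^*=-\pa_x C_3/\pa_x C_4$ into \eqref{neww}). For Case C, Proposition \ref{prop_c3} gives the criterion in terms of $\pa_x C_5,\pa_x C_6,C_7,C_8$, so one only needs the explicit forms of $C_7$ and $C_8$: using $\pa_x C_5=\pa_x u_0$ and $\pa_x C_6=\tfrac{2}{\sqrt\square}\big(-\tfrac12\pa_x u_0-M_0+2\rho_0\big)$ from \eqref{case3_est1} and expanding $\big(-\tfrac12\pa_x u_0+(2\rho_0-M_0)\big)^2$ one gets
\[
(\pa_x C_5)^2+(\pa_x C_6)^2=\frac{1+\square}{\square}(\pa_x u_0)^2+\frac{4}{\square}\big(2\rho_0-M_0\big)\big(2\rho_0-M_0-\pa_x u_0\big),
\]
which, substituted into \eqref{case3_c2}, yields the stated $C_7$, while $C_8=\arctan(\pa_x C_5/\pa_x C_6)=\arctan\!\big(\sqrt\square\,\pa_x u_0/(4\rho_0-2M_0-\pa_x u_0)\big)$. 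The main obstacle is the continuation argument of the first two paragraphs: verifying that the explicit formulas persist exactly up to $T$, that no quantity other than $\pa_x\eta$ can become singular first, and that the positive lower bound on $\pa_x\eta$ on finite time intervals survives near $\pa\om_0$, where $\rho_0$ vanishes and $\lim_{t\to\infty}\pa_x\eta=0$; the passage through Propositions \ref{prop_c1}--\ref{prop_c3} is then routine bookkeeping.
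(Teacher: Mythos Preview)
Your proposal is correct and follows the same approach as the paper: reduce finite-time blow-up to the condition that $\pa_x\eta$ attains a non-positive value via $f=\rho_0/\pa_x\eta$ from \eqref{lag_eq1}, and then invoke Propositions \ref{prop_c1}--\ref{prop_c3}. The paper's proof is in fact only two sentences and leaves implicit both the continuation argument and the algebra expressing $C_7,C_8$ in terms of $\rho_0,\pa_x u_0$, so your version is more thorough than, but not different from, the original.
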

\begin{proof}It follows from $\eqref{lag_eq1}$ that 
\[
\rho(t,\eta(t,x)) = \rho_0(x) \lt(\pa_x \eta(t,x)\rt)^{-1}.
\]
Thus the density $\rho$ blows up if and only if $\inf_{x \in \om_0} \pa_x \eta(t,x) \leq 0$ for some finite time $t>0$. We finally use Propositions \ref{prop_c1} -- \ref{prop_c3} to conclude the desired result.
\end{proof}

\begin{remark}\label{rempos}
One can easily check that the previous theorem holds also for the case  $\Omega_0=\R$, provided that the initial density is positive and integrable and that the following conditions are satisfied
\eqv{\label{Rem_31}
\int_\R |x| \rho_0(x)\,dx <\infty \qquad \mbox{and} \qquad \int_\R \rho_0(x) |u_0(x)|\,dx <\infty \,.
}
\end{remark}

%
%
%
%

\section{Asymptotic behaviour}
The purpose of this section is to investigate the large time asymptotic behaviour of the explicitly constructed classical solutions to system \eqref{lag_eq} ensured by Theorem \ref{Th:main}.

\begin{theorem}\label{thm_large} Let $(f,v)$ be a global-in-time classical solution to the system \eqref{lag_eq}-\eqref{ini_lag_eq} given by Theorem  {\rm\ref{Th:main}}. Then it satisfies 
\[
f_\infty(x):=\lim_{t\to\infty} f(t,x) = \frac{M_0}{2} \quad \mbox{and} \quad v_\infty(x):=\lim_{t\to\infty} v(t,x) = 0 \quad \mbox{for all } x\in\Omega_0,
\]
exponentially fast. Moreover, the characteristic flow satisfies
$$
\eta_\infty(x) := \lim_{t\to\infty} \eta(t,x) = \frac{1}{M_0}\lt(\int_{\Omega_0}\!\! y\rho_0(y)\,dy +\!\! \int_{\Omega_0}\!\! \rho_0(y) \,u_0(y)\,dy + 2\int_{a_0}^x \!\!\rho_0(y)\,dy -M_0\rt)
$$
for all $x\in\Omega_0$. In particular, $\Omega(t)=(a(t),b(t))$ and
$$
\lim_{t\to\infty} |a(t)-\Gamma+1|=0 \quad \mbox{ and } \quad \lim_{t\to\infty} |b(t) -\Gamma-1|=0\,,
$$
exponentially fast.
\end{theorem}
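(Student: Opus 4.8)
\emph{Overview and Step 1 (velocity and density).} The plan is to read off all the asymptotics directly from the closed forms obtained in Section~\ref{Sec:2}, the only non-mechanical point being the value of the limiting shift of the flow map, which I will extract from a conservation law rather than by simplifying the explicit expressions for $\eta$. In each of the cases A, B, C the velocity $v(t,x)$ in \eqref{sol_formA}, \eqref{sol_formB}, \eqref{sol_formC} is a finite sum of terms, each a function of $x$ times one of $e^{\lambda_1 t}$, $e^{\lambda_2 t}$, $e^{-t}$, $t\,e^{-t/2}$, $e^{-t/2}\coss$, $e^{-t/2}\sins$; since $\lambda_1,\lambda_2$ in \eqref{coeff_l} are the two roots of $\lambda^2+\lambda+M_0=0$ with $M_0>0$, both lie in $(-1,0)$, so every exponent occurring is strictly negative. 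Hence $v(t,x)\to0$; moreover the coefficients $C_i(x)$ in \eqref{coeff_12}--\eqref{coeff_56} are continuous on the compact interval $[a_0,b_0]$ (recall $\rho_0,u_0\in\mc([a_0,b_0])$ and $v_0'$ is given by \eqref{vt0}), hence bounded, so $|v(t,x)|\le Ce^{-\delta t}$ for some $\delta>0$, uniformly in $x\in\om_0$. The same inspection applied to \eqref{lt_etaA}, \eqref{lt_etaB}, \eqref{lt_etaC}, together with \eqref{case1_est1}, \eqref{case2_est1}, \eqref{case3_est1} and $\rho_0,\pa_x u_0\in\mc([a_0,b_0])$, shows that $\pa_x\eta(t,x)$ equals $\frac{2\rho_0(x)}{M_0}$ plus a finite sum of terms with the same strictly negative exponents and continuous-in-$x$ coefficients, so $\pa_x\eta(t,\cdot)\to\frac{2\rho_0}{M_0}$ uniformly on $\om_0$ and exponentially fast --- as already noted at the start of Section~3. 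Since $\rho_0>0$ on the open interval $\om_0$ (the density being positive on its support interval), the identity $f(t,x)=\rho_0(x)/\pa_x\eta(t,x)$ coming from \eqref{lag_eq1} yields $f_\infty(x)=\rho_0(x)\cdot\frac{M_0}{2\rho_0(x)}=\frac{M_0}{2}$ for every $x\in\om_0$, exponentially fast.

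\emph{Step 2 (flow map and endpoints).} From \eqref{epe}, $\eta(t,x)=x+\int_0^t v(s,x)\,ds$; by the uniform bound $|v(s,x)|\le Ce^{-\delta s}$ the limit $\eta_\infty(x):=x+\int_0^\infty v(s,x)\,ds$ exists and $|\eta(t,x)-\eta_\infty(x)|\le (C/\delta)e^{-\delta t}$, so the endpoint convergences, once identified, will automatically be exponentially fast. Since $\pa_x\eta(t,\cdot)\to\frac{2\rho_0}{M_0}$ uniformly on $\om_0$, we get $\eta_\infty\in\mc^1(\om_0)$ with $\pa_x\eta_\infty=\frac{2\rho_0}{M_0}$, hence
\[
\eta_\infty(x)=\eta_\infty(a_0)+\frac{2}{M_0}\int_{a_0}^x\rho_0(y)\,dy .
\]
To pin down $\eta_\infty(a_0)$, multiply \eqref{epe} by $\rho_0$, integrate over $\om_0$, and use \eqref{mom} to obtain $\frac{d}{dt}\int_{\om_0}\eta(t,x)\rho_0(x)\,dx=\int_{\om_0}v(t,x)\rho_0(x)\,dx=M_1 e^{-t}$, so that $\int_{\om_0}\eta(t,x)\rho_0\,dx=\int_{\om_0}x\rho_0\,dx+M_1(1-e^{-t})$, which tends to $\int_{\om_0}x\rho_0\,dx+M_1=M_0\Gamma$ as $t\to\infty$. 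On the other hand, inserting the displayed formula for $\eta_\infty$ and using $\int_{a_0}^{b_0}\rho_0(x)\big(\int_{a_0}^x\rho_0(y)\,dy\big)\,dx=\tfrac12 M_0^2$ gives $\int_{\om_0}\eta_\infty\rho_0\,dx=M_0\,\eta_\infty(a_0)+M_0$. Comparing the two identities yields $\eta_\infty(a_0)=\Gamma-1$, whence
\[
\eta_\infty(x)=\Gamma-1+\frac{2}{M_0}\int_{a_0}^x\rho_0(y)\,dy=\frac{1}{M_0}\Big(\int_{\om_0} y\rho_0\,dy+\int_{\om_0}\rho_0 u_0\,dy+2\int_{a_0}^x\rho_0\,dy-M_0\Big),
\]
which is the asserted formula. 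Evaluating at $x=a_0$ and $x=b_0$ and using $\int_{a_0}^{b_0}\rho_0=M_0$ gives $a(t)=\eta(t,a_0)\to\Gamma-1$ and $b(t)=\eta(t,b_0)\to\Gamma+1$, exponentially fast by the rate bound above; this also shows $\Omega(t)=\eta(t,\om_0)=(a(t),b(t))$.

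\emph{Main obstacle.} There is essentially no analytic difficulty: everything reduces to reading signs of exponents in formulas that are already in hand. The only point that is not purely mechanical is the identification of the constant $\eta_\infty(a_0)$; rather than letting $t\to\infty$ in the case-dependent expressions \eqref{etaA}, \eqref{etaB}, \eqref{etaC} and laboriously simplifying the resulting combinations of the $C_i$, I circumvent that computation using the conserved weighted barycenter $\int_{\om_0}\eta\,\rho_0\,dx$. The only mild care required is to keep all exponential decay rates uniform in $x\in\om_0$, so that $\eta_\infty\in\mc^1(\om_0)$ and the interchange of $\lim_{t\to\infty}$ with $\pa_x$ used in Step~2 is legitimate.
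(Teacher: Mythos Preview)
Your proof is correct. Step~1 is essentially identical to the paper's argument: both read off the exponential decay of $v$ and the convergence $\pa_x\eta\to 2\rho_0/M_0$ directly from the closed forms \eqref{sol_formA}--\eqref{sol_formC} and \eqref{lt_etaA}--\eqref{lt_etaC}, invoking boundedness of the coefficients $C_i$, $\pa_x C_i$ on $[a_0,b_0]$.

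Step~2, however, takes a genuinely different route. The paper identifies $\eta_\infty(x)$ by letting $t\to\infty$ directly in the case-dependent formulas \eqref{etaA}, \eqref{etaB}, \eqref{etaC} and simplifying the resulting combinations of the $C_i$ (a calculation the paper does not write out). You instead integrate the already-known derivative $\pa_x\eta_\infty=2\rho_0/M_0$ and fix the single unknown constant $\eta_\infty(a_0)$ via the barycenter identity $\int_{\om_0}\eta(t,x)\rho_0(x)\,dx=\int_{\om_0}x\rho_0\,dx+M_1(1-e^{-t})$, which comes from \eqref{mom}. This is cleaner: it is case-independent, avoids manipulating the explicit $C_i$, and makes transparent why the quantity $\Gamma$ in \eqref{newintro} appears. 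The paper's approach, on the other hand, is entirely mechanical once the formulas for $\eta$ are in hand and requires no additional observation. One small point worth making explicit in your write-up: since $\om_0=(a_0,b_0)$ is open, ``$\eta_\infty(a_0)$'' and ``$a(t)=\eta(t,a_0)$'' should be read as the one-sided limits; your uniform-in-$x$ bound $|\eta(t,x)-\eta_\infty(x)|\le(C/\delta)e^{-\delta t}$ is exactly what passes to these limits and gives the exponential endpoint convergence.
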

\begin{proof} We claim that if there is no blow-up
\[
\lim_{t \to \infty}\frac{\pa \eta(t,x)}{\pa x} \to \frac{2\rho_0(x)}{M_0} \quad \mbox{for all} \quad x \in \om_0.
\]
It simply follows from the explicit formulas for $\pa_x\eta$ obtained in Section \ref{Sec:2}, namely \eqref{lt_etaA}, \eqref{lt_etaB}, and \eqref{lt_etaC}. On account of $\eqref{lag_eq1}$, we therefore have
\[
\lim_{t \to \infty} f(t,x) = \frac{M_0}{2} \quad \mbox{for} \quad x \in \om_0.
\]
Finally, it is obvious due to \eqref{coeff} that all functions $\pa_x C_i$ are bounded due to $\rho_0, \pa_x u_0\in \mc([a_0,b_0])$, and thus, there exists a constant $ C>0$ such that
\[
\max_{1 \leq i \leq 6}\|C_i(x)\|_{L^\infty(\Omega_0)} \leq C.
\]
This yields
$$
\|v(t,\cdot)\|_{L^\infty(\Omega_0)} \leq  C e^{-\lambda t} \quad \mbox{for some} \quad \lambda > 0.
$$

Since there is no blow-up of solution, we know that $\pa_x\eta(t,x)>0$ for all $(t,x)\in [0,\infty)\times\om_0$. Thus, $\rho(t,\eta(t,x))>0$ for all $(t,x)\in [0,\infty)\times\om_0$, and so, $\Omega(t)$ is connected since $\eta(t,x)$ is a diffeomorphism from the connected set $\om_0$ onto $\Omega(t)$. We denote $\Omega(t)=(a(t),b(t))$, where
$$
a(t)=\lim_{x\to a_0 +} \eta(t,x) \qquad \mbox{and} \qquad b(t)=\lim_{x\to b_0 -} \eta(t,x)\,.
$$

Finally, we can compute based on the explicit formulas for $\eta(t,x)$ given in \eqref{etaA}, \eqref{etaB}, and \eqref{etaC} that
$$
\lim_{t\to\infty} \eta(t,x) = \frac{1}{M_0}\lt(\int_{\Omega_0} y\rho_0(y)\,dy + \int_{\Omega_0} \rho_0(y) \,u_0(y)\,dy + 2\int_{a_0}^x \rho_0(y)\,dy -M_0\rt),
$$
for all $x\in\Omega_0$ and we deduce again that there exists constants $\bar C>0$ and $\bar \lambda>0$ such that 
\eqv{\label{ab1}
\lim_{t\to\infty} |a(t) -\Gamma+ 1|\leq \bar Ce^{-\bar\lambda t} \qquad \mbox{and} \qquad\lim_{t\to\infty} |b(t) -\Gamma- 1|\leq \bar Ce^{-\bar\lambda t}
}
with $\Gamma$ given in \eqref{newintro}.
\end{proof}

\begin{remark}
As a consequence of Theorem {\rm\ref{thm_large}}, we conclude
$$
\lim_{t\to\infty} \rho(t,\eta(t,x)) = \frac{M_0}{2} \quad \mbox{ and } \quad
\lim_{t\to\infty} u(t,\eta(t,x)) = 0
$$
for all $x\in\Omega_0$. We can also check that $\eta_\infty(x)$ is  a diffeomorphism from $\Omega_0$ to $(\Gamma-1,\Gamma+1)$. The previous theorem and this remark also hold for positive initial density defined on the whole $\R$ under the assumptions {\rm\eqref{Rem_31}}.
\end{remark}

In order to understand the large time behaviour of $\rho(t,y)$ in the Eulerian variables, one should invert the characteristics $\eta(t,x)$. This would be a daunting task in view of complexity of the explicit formulas for $\eta$ given in\eqref{etaA}, \eqref{etaB}, and \eqref{etaC} and we do not intend to do it. However, one can estimate the error in $L^1$ norm between $\rho(t,y)$ and the expected asymptotic profile
$$
\rho_\infty (y) = \frac{M_0}2 \chi_{\Omega_\infty}(y) \mbox{ for } y\in\R\,,
$$
where  $\chi_\Omega$ is the characteristic function of the interval $\Omega$, recall that $\Omega_\infty=(\Gamma-1,\Gamma+1)$. In order to estimate this difference, we define an intermediate function $\tilde\rho$ that will simplify our computations:
$$
\tilde\rho (t,y)=\frac{M_0}{2}\chi_{\Omega(t)}(y) \mbox{ for } y\in\R\,.
$$
By using the Lagrangian change of variables \eqref{lag_eq1}, we deduce
\eqv{\label{L1b}
\|\rho(t,\cdot)-\tilde\rho (t,\cdot)\|_{L^1(\R)} &= 
\int_{\Omega_0} \left| f(t,x)-\frac{M_0}{2} \right| \pa_x \eta(t,x) \,dx \cr &= 
\int_{\Omega_0} \left| \rho_0(x)-\frac{M_0}{2} \pa_x \eta(t,x)\right|  \,dx \,.
}
Theorem \ref{thm_large} shows that
$$
\lim_{t\to\infty} \left[\rho_0(x)-\frac{M_0}{2} \pa_x \eta(t,x)\right] = 0\,, \quad \mbox{for all} \quad x \in \om_0
$$
due to $\pa_x \eta > 0$. Since $\rho_0,\partial_x u_0 \in H^2(\Omega_0)$, then by the Sobolev embeddings $\rho_0,\partial_x u_0 \in \mc^1(\Omega_0)$, and thus by the  explicit expressions of $\pa_x\eta$ in \eqref{lt_etaA}, \eqref{lt_etaB} and \eqref{lt_etaC} we easily get
$$
\|\pa_x\eta\|_{L^\infty((0,\infty)\times \Omega_0)}<\infty\,.
$$
Therefore the integrand in \eqref{L1b} is bounded by a constant and the dominated convergence theorem implies that 
$$
\lim_{t\to\infty} \|\rho(t,\cdot)-\tilde\rho (t,\cdot)\|_{L^1(\R)}= 0\,.
$$
It is also true on account of \eqref{ab1} that
$$
\lim_{t\to\infty} \|\rho_\infty(\cdot)-\tilde\rho (t,\cdot)\|_{L^1(\R)}= 0\,.
$$
Putting together the above results, we have 
$$
\lim_{t\to\infty} \|\rho (t,\cdot)-\rho_\infty(\cdot)\|_{L^1(\R)}= 0\,.
$$
We can even improve this result providing a rate of convergence. 

\begin{corollary}\label{cor_large} Let $(\rho,u)$ be the global-in-time classical solution to the system \eqref{main_eq}-\eqref{ini_main_eq} given by Theorem  {\rm\ref{Th:main}}. Then there exists $C>0$ depending on the $L^\infty$ bounds of $\rho_0$ and $\pa_x u_0$ in $\Omega_0$ and $\lambda>0$ depending on the initial mass $M_0$ such that
$$
\|\rho (t,\cdot)-\rho_\infty(\cdot)\|_{L^1(\R)}\leq C e^{-\lambda t}\,.
$$
\end{corollary}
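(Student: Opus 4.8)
The plan is to upgrade the convergence $\|\rho(t,\cdot)-\rho_\infty\|_{L^1(\R)}\to 0$ established in the discussion preceding the statement into an exponential rate, by tracking the rates in the two triangle-inequality pieces separately. Using the intermediate profile $\tilde\rho(t,y)=\tfrac{M_0}{2}\chi_{\Omega(t)}(y)$, I would write
\[
\|\rho(t,\cdot)-\rho_\infty(\cdot)\|_{L^1(\R)} \leq \|\rho(t,\cdot)-\tilde\rho(t,\cdot)\|_{L^1(\R)} + \|\tilde\rho(t,\cdot)-\rho_\infty(\cdot)\|_{L^1(\R)},
\]
and bound each summand by a term of the form $Ce^{-\lambda t}$.

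For the first summand, I would start from the identity \eqref{L1b}, namely
\[
\|\rho(t,\cdot)-\tilde\rho(t,\cdot)\|_{L^1(\R)} = \int_{\Omega_0}\left|\rho_0(x)-\frac{M_0}{2}\pa_x\eta(t,x)\right|dx = \frac{M_0}{2}\int_{\Omega_0}\left|\pa_x\eta(t,x)-\frac{2\rho_0(x)}{M_0}\right|dx.
\]
Now the point is that the explicit formulas \eqref{lt_etaA}, \eqref{lt_etaB}, \eqref{lt_etaC} all express $\pa_x\eta(t,x)$ as $\tfrac{2\rho_0(x)}{M_0}$ plus a finite sum of terms of the form $(\text{coefficient depending on }x)\times e^{-t/2}\times(1,\,t,\,\cos,\text{ or }\sin)$ in Cases B and C, and as $\tfrac{2\rho_0(x)}{M_0}+\tfrac{\pa_x C_1}{\lambda_1}e^{\lambda_1 t}+\tfrac{\pa_x C_2}{\lambda_2}e^{\lambda_2 t}$ in Case A with $\lambda_1,\lambda_2<0$. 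Since $\rho_0,\pa_x u_0\in\mc([a_0,b_0])$, all the coefficient functions $\pa_x C_i$ are bounded on $\Omega_0$ (this is already observed in the proof of Theorem~\ref{thm_large}), so in every case there are constants $C>0$ and $\lambda>0$, with $\lambda$ depending only on $M_0$ (one may take $\lambda=|\lambda_1|=\tfrac{1-\sqrt{1-4M_0}}{2}$ in Case A, any $\lambda<\tfrac12$ in Case B to absorb the factor $t$, and $\lambda=\tfrac12$ in Case C), such that $\big|\pa_x\eta(t,x)-\tfrac{2\rho_0(x)}{M_0}\big|\leq Ce^{-\lambda t}$ uniformly in $x\in\Omega_0$; integrating over the bounded interval $\Omega_0$ gives the claimed bound on the first summand.

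For the second summand, $\tilde\rho$ and $\rho_\infty$ are both equal to $\tfrac{M_0}{2}$ on overlapping intervals, so $\|\tilde\rho(t,\cdot)-\rho_\infty(\cdot)\|_{L^1(\R)} = \tfrac{M_0}{2}\big(|\,\Omega(t)\,\triangle\,\Omega_\infty|\big) \leq \tfrac{M_0}{2}\big(|a(t)-(\Gamma-1)| + |b(t)-(\Gamma+1)|\big)$, and this is $\leq Ce^{-\bar\lambda t}$ by the exponential endpoint convergence \eqref{ab1} from Theorem~\ref{thm_large}; one should check quickly that the endpoint rate $\bar\lambda$ is again governed by $M_0$ through the same exponents appearing in \eqref{etaA}, \eqref{etaB}, \eqref{etaC}. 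Combining the two bounds and renaming constants yields $\|\rho(t,\cdot)-\rho_\infty(\cdot)\|_{L^1(\R)}\leq Ce^{-\lambda t}$ with $C$ depending on $\|\rho_0\|_{L^\infty(\Omega_0)}$ and $\|\pa_x u_0\|_{L^\infty(\Omega_0)}$ and $\lambda$ on $M_0$.

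I expect the only mild subtlety to be bookkeeping of the decay exponents across the three regimes — in particular handling the polynomial prefactor $t\,e^{-t/2}$ in Case B (absorbed by passing to any slightly smaller exponent) and making sure the same $\lambda$ works simultaneously for the density-deformation term and the endpoint term so that a single rate can be quoted; everything else is a direct reading-off from the explicit formulas already derived, so there is no genuine analytic obstacle.
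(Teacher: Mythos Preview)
Your proposal is correct and follows essentially the same route as the paper: the same triangle-inequality splitting via $\tilde\rho$, the same reading-off of exponential decay in $\pa_x\eta-\tfrac{2\rho_0}{M_0}$ from the explicit formulas with the same case-by-case rates (including the $\epsilon$-loss in Case~B), and the same endpoint bound \eqref{ab1} for the second piece. Your symmetric-difference computation for $\|\tilde\rho-\rho_\infty\|_{L^1}$ is in fact slightly more explicit than the paper's one-line citation of \eqref{ab1}, but the content is identical.
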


\begin{proof}
Using the explicit expressions for $\pa_x\eta$ in \eqref{lt_etaA}, \eqref{lt_etaB}, and \eqref{lt_etaC}, we can write the integrand in \eqref{L1b} as
$$
\rho_0(x)-\frac{M_0}{2} \pa_x \eta(t,x)=-\frac{M_0}{2} \xi(t,x)\,
$$
where 
$$
\xi(t,x):=
\begin{cases}
\frac{\pa_x C_1}{\lambda_1}e^{\lambda_1 t} + \frac{\pa_x C_2}{\lambda_2}e^{\lambda_2 t} & \mbox{in {\bf A}}\\[2mm]
- \lt(2\pa_x C_3 + 4\pa_x C_4 \rt)e^{-t/2} - 2\pa_x C_4 t \,e^{-t/2} & \mbox{in {\bf B}}\\[2mm]
\lt( \frac{2\square}{1 + \square}\rt)e^{-t/2}\lt[\lt(\frac{\pa_x C_5}{\sqrt\square} - \frac{\pa_x C_6}{\square}\rt)\sins 
- \lt(\frac{\pa_x C_5}{\square} + \frac{\pa_x C_6}{\sqrt\square}\rt)\coss\rt] & \mbox{in {\bf C}}
\end{cases}\,.
$$
Therefore, it is easy to check due to \eqref{coeff} that all functions $\pa_x C_i$ are bounded due to $\rho_0, \pa_x u_0\in \mc([a_0,b_0])$, and thus, there exists a constant $\tilde C>0$ such that
$$
\|\xi(t,\cdot)\|_{L^\infty(\Omega_0)}\leq \tilde C e^{-\tilde \lambda t}
$$
with
$$
\tilde \lambda:=
\begin{cases}
-\lambda_1 & \mbox{in {\bf A}}\\[2mm]
\tfrac12 -\epsilon & \mbox{in {\bf B}}\\[2mm]
\tfrac12 & \mbox{in {\bf C}}
\end{cases}\,,
$$
with $\epsilon>0$ arbitrarily small.
Using these estimates back in \eqref{L1b}, we get
\eqh{
\|\rho(t,\cdot)-\tilde\rho (t,\cdot)\|_{L^1(\R)} &\leq  \frac{|\Omega_0|}{2}\|\xi(t,\cdot)\|_{L^\infty(\Omega_0)}M_0
 \leq \frac{\tilde C M_0|\Omega_0|}{2} e^{- \tilde\lambda t}.
}
The remaining term is also straightforward to estimate, using \eqref{ab1} we have
\eqh{
\|\tilde\rho (t,\cdot)-\rho_\infty(\cdot)\|_{L^1(\R)}\leq \frac{M_0}2 \int_{\Omega_0} |\chi_{\Omega(t)}-\chi_{\Omega_\infty}| dx\leq \bar C e^{-\bar\lambda t}\,
}
and we conclude by taking
$$
C=\min\left\{\frac{\tilde C M_0|\Omega_0|}{2},\bar C\right\},\quad\mbox{and}\quad \lambda=\min\{\tilde\lambda,\bar \lambda\}.
$$
\end{proof}

\begin{remark}
Let us point out that one can give more qualitative estimate on the intermediate asymptotics of the solutions. Actually, one can prove as in Corollary {\rm\ref{cor_large}} that the $L^1$ difference between any solution and the density profile
$$
\bar\rho (t,y)=\frac{M_0}{|\Omega(t)|}\chi_{\Omega(t)}(y) \mbox{ for } y\in\R\,,
$$
converges exponentially fast to zero. Depending on the different time scales involved, one can have cases in which this tendency to adjust to $\bar\rho$ is faster initially before the solution finally relaxes to the global equilibrium $\rho_\infty$. Adapting the previous arguments for positive initial data under the assumptions in Remark {\rm\ref{rempos}} seems challenging. This needs a smart control of the tails of the solutions as $t\to\infty$ depending on decaying/growth conditions at $x=\pm\infty$ of the density and the velocity profiles.
\end{remark}

Let us illustrate the results of the last sections with some numerical experiments performed using a particle method to solve the Lagrangian equations \eqref{lag_eq}. We refer to \cite{CCP} for details on the numerical scheme, see also \cite{KT} for related numerical strategies. We use an initial uniform distribution of nodes given by
\[
\eta_i(0)=-0.75+\frac{1.5}{n-1}\left(i-1\right)\quad \mbox{for} \quad i=1,\cdots,n.
\]
\begin{figure}[ht!]
\quad\,\, \subfloat[]{
\protect\includegraphics[scale=0.32]{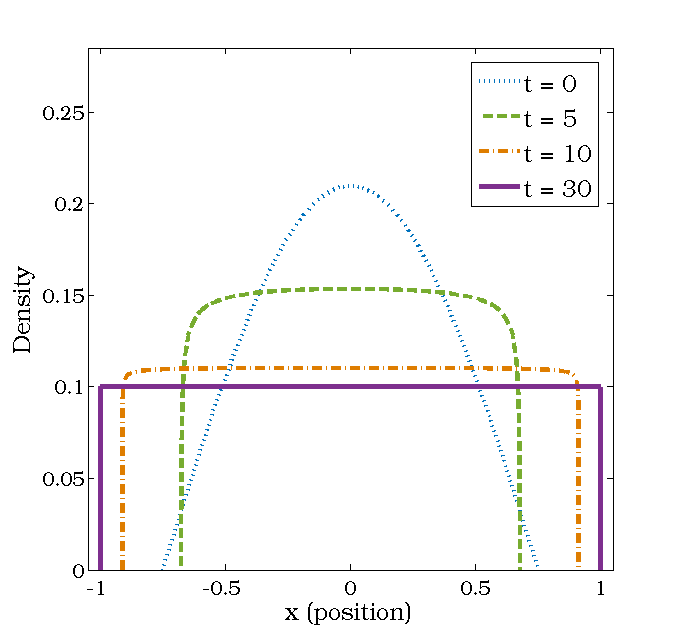}
\llap{\shortstack{%
        \includegraphics[scale=.09]{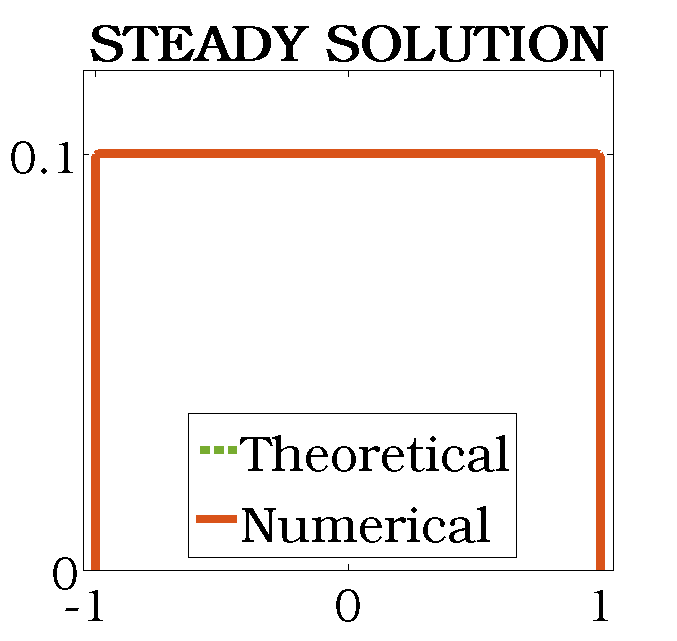}\\
        \rule{0ex}{1.3in}%
      }
  \rule{1.31in}{0ex}}
}\subfloat[]{\protect\includegraphics[scale=0.32]{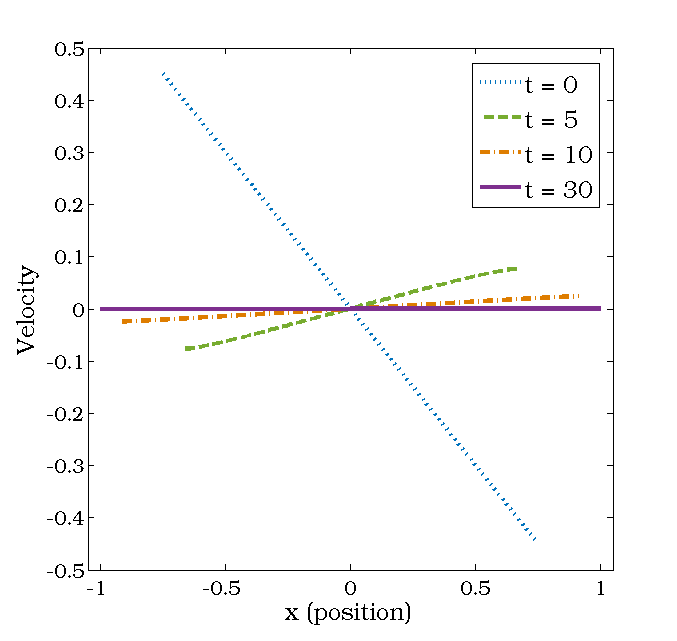}
}\newline
\subfloat[]{
\protect\includegraphics[scale=0.32]{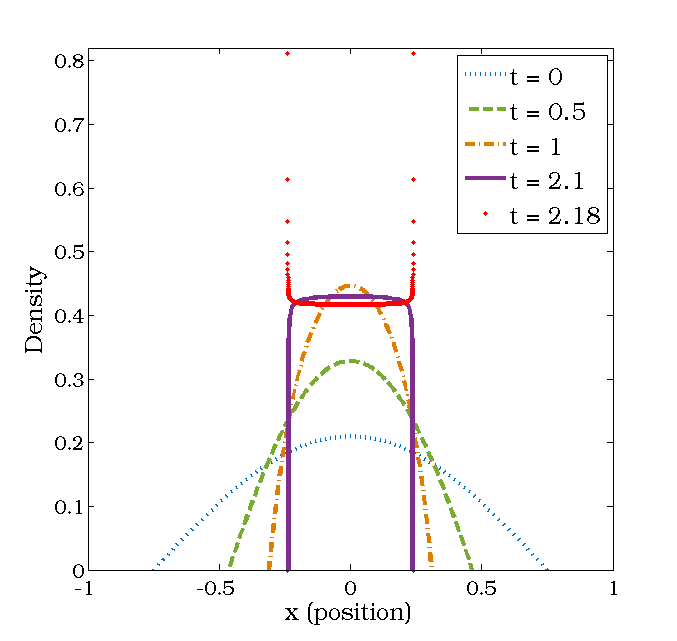}
}\subfloat[]{\protect\includegraphics[scale=0.32]{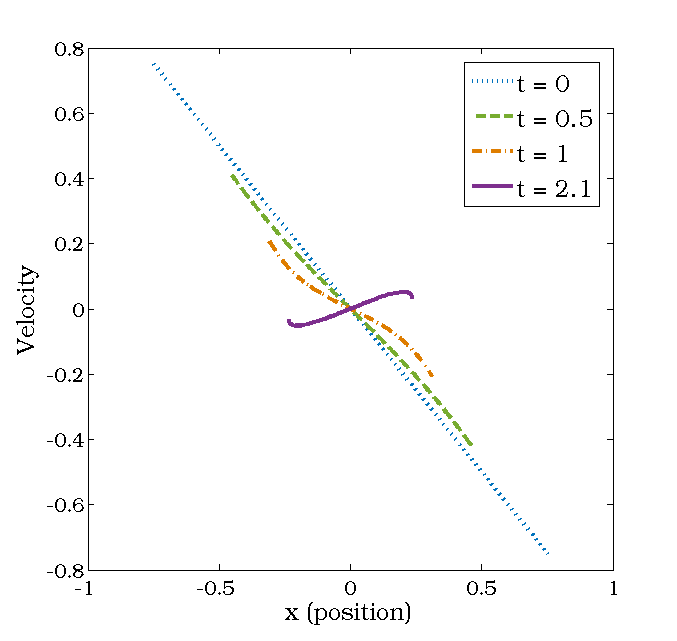}
\llap{\shortstack{%
        \includegraphics[scale=.1]{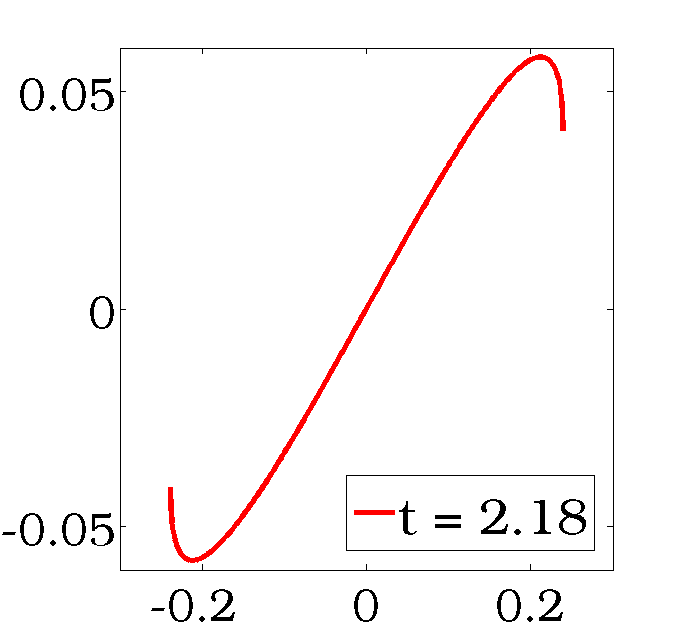}\\
        \rule{0ex}{0.23in}%
      }
  \rule{1.23in}{0ex}}
}\protect\caption{\label{fig:figure} Numerical simulation of the system \eqref{main_eq} in the Lagrangian variables.- (A), (B): Time behavior of the density and the velocity for a global existence case ($c=0.6$). (C), (D): Time behavior of the density and the velocity for a finite time blow-up case ($c=1$).}
\end{figure}
The initial density is chosen as 
\[
\rho_{i}(0)=\frac{1}{\gamma}\cos\left(\pi\,\frac{x_{i}(0)}{1.5}\right),
\]
where the constant $\gamma$ is fixed so that the total mass $M_0 := \int_\R \rho_0\,dx=0.2$. Concerning the initial velocity, we choose
\[
u_{i}(0)=-c\text{\,}x_{i}(0)\quad\mbox{for each node } i=1,\cdots, n,
\]
where the two values of the parameter $c$ will be $0.6$ and $1$. For the case $c=0.6$ there is global classical solution and for the case $c=1$ there is finite-time blow-up according to Theorem \ref{Th:main}. 

In Fig. \ref{fig:figure} (A) and (B), we observe the dynamics of the solution converging towards the asymptotic profile $\rho_\infty$ as $t$ gets larger while the velocity becomes zero everywhere in the support of $\rho$. The solution after $t=30$ is plotted against the asymptotic profile steady state $\rho_\infty$ in the inlet for further validation.

In Fig. \ref{fig:figure} (C) and (D), we show the dynamics of the solution in the blow-up case. In the density evolution, we observe how the density is squeezing towards the asymptotic profile up to certain time $t=2.1$, after which the density becomes larger and larger at the boundary. The blow-up is clearer in the velocity profile where we see that the derivative of the velocity becomes unbounded at the boundary at approximately $t=2.179$ as depicted in the inlet. At this time before several nodes have been removed for the density symmetrically near the boundary for visualization purposes, whose largest value is 27.94.

\begin{remark} Observe that the same asymptotic profile $\rho_\infty$ is obtained as the large time asymptotics of the first-order aggregation equation:
$$\begin{aligned}
& \pa_t \rho + \nabla_x \cdot (\rho u) = 0,\quad (t,x) \in \R_+ \times \R^d,\cr
& u = -\nabla_x W \star \rho,\quad W(x) = -\phi(x) + \frac{|x|^2}{2} \quad \mbox{where} \quad \Delta_x \phi = 2\delta_0.
\end{aligned}$$
Indeed, one can easily find the dynamics of $\rho$ along its characteristic flow. More precisely, we get
\[
f' = -f (\nabla_x \cdot u)(t,\eta(t,x)) = -f^2(2 - dM_0 f^{-1}).
\]
This and together with the Gronwall inequality yields
\[
f(t,x) = \frac{dM_0\rho_0 }{(dM_0 - 2\rho_0)e^{-dM_0t} + 2\rho_0} \to \frac{dM_0} {2} \quad \mbox{as} \quad t \to \infty,
\]
for some $x \in \om_0$. These facts were already analysed both theoretically and numerically in \cite{BLL} for the attractive and repulsive Newtonian potentials in any dimension. In fact, the aggregation equation can be formally understood as the large friction limit of \eqref{main_eq}, see \cite{LT} for related asymptotic limits. Let us also point out that this aggregation equation for Newtonian repulsive interaction can be obtained from particle dynamics \cite{BCDP}.
\end{remark}

\begin{remark}
Further extensions for potentials may be possible following the previous strategy. Let us consider a more repulsive force at the origin in our main system \eqref{main_eq} by defining the potential $W(x)$ to be
$$
W(x)=-\frac{|x|^\alpha}{\alpha}+\frac{x^2}{2},
$$
with $-1<\alpha<1$. Here, $\frac{|x|^0}{0}:=\log |x|$ by definition. It is well known that $\frac{|x|^\alpha}{\alpha}$ is the fundamental solution of the fractional operator $-(-\partial_{xx})^{(1+\alpha)/2}$ except a positive constant. More precisely, one can check that
$$
-(-\partial_{xx})^{(1+\alpha)/2} \lt(\frac{|x|^\alpha}{\alpha}\rt) = k\delta_0 
$$
with $k>0$, see \cite{Landkof,Stein} and \cite{CFP,ledoux,CHSV} for the one dimensional case. These potentials have been used for first-order aggregations models as in previous remark in \cite{chafai} and they are related to the eigenvalue distribution of random matrices. In particular, the following relations hold for sufficiently smooth functions $\rho$
$$
W\ast\rho = -(-\partial_{xx})^{-(1+\alpha)/2}\rho + \frac{x^2}{2}\ast\rho\,, \quad\pa W\ast\rho = -\lt[\pa_x(-\partial_{xx})^{-(1+\alpha)/2}\rt]\rho+ x\ast \rho\,,
$$
and 
\eqv{\label{lappot}
-(-\partial_{xx})^{(1+\alpha)/2} (W\ast\rho) = \rho - (-\partial_{xx})^{\alpha/2}( x\ast \rho)\,.
}
Note that in the case $\alpha =0$, the derivative of $W\ast\rho$ is given by the Hilbert transform. The fractional operator $\pa_x(-\partial_{xx})^{-(1+\alpha)/2}$ when $-1<\alpha\leq 0$ has to be understood in the Cauchy principal value sense. 
With this information, we can now write the Euler-type equations for this potential in Lagrangian coordinates as
  \begin{subequations}\label{lag_eq_f}
    \begin{align}
      f(t,x)\frac{\pa \eta(t,x)}{\pa x} &= \rho_0(x), \quad (t,x) \in \R_+ \times \om_0, \label{lag_eqf_1}\\
    v'(t,x) + v(t,x) =&-\int_{\om(t)} \pa W(\eta(t,x) - y) \rho(t,y)dy \nonumber\\
      =& \int_{\om_0} \lt[\pa_x(-\partial_{xx})^{-(1+\alpha)/2} \lt(\frac{|x|^\alpha}{\alpha}\rt) \rt](\eta(t,x)- \eta(t,y))\rho_0(y)\,dy\nonumber\\
        &  - \int_{\om_0} (\eta(t,x) - \eta(t,y))\rho_0(y)\,dy\,. \label{lag_eqf_2}
    \end{align}
  \end{subequations}
Now, we would like to proceed by formally applying the differential operator $\pa_t^\alpha$ to \eqref{lag_eqf_2} taking into account \eqref{lappot} to find
$$\begin{aligned}
\pa_t^\alpha(v') + \pa_t^\alpha(v) 
=& \int_{\om_0} \delta(\eta(t,x)- \eta(t,y)) (v(t,x)- v(t,y))^\alpha\rho_0(y)\,dy\\ 
  &-\pa_t^{\alpha}(\eta)M_0  + \pa_t^{\alpha}\lt( \int_{\om_0} \eta(t,y)\rho_0(y)\,dy \rt)\\
=& -\pa_t^{\alpha-1}(v)M_0  + \pa_t^{\alpha - 1}\lt( \int_{\om_0} v(t,y)\rho_0(y)\,dy \rt)\,,
\end{aligned}$$
in case we are able to use the following chain rule for fractional derivatives 
$$
\pa_x^\alpha f(g(x)) = (\pa_g^\alpha f(g))\big|_{g = g(x)} \lt( \pa_x g(x)\rt)^\alpha.
$$
It is unclear though how to rigorously justify such chain rule, see \cite[Lemma 12]{Jum} for non-smooth settings. Assuming that $\pa_t^{\alpha - 1}$ is the inverse operator of $\pa_t^{1-\alpha}$, then we recover for $\alpha=1$ our core formula \eqref{eq_diff}. Using \eqref{mom} we can compute
$$
\pa_t^{\alpha - 1}\lt( \int_{\om_0} v(t,y)\rho_0(y)\,dy \rt) = \pa_t^{\alpha - 1}\lt( e^{-t}\int_{\om_0} (\rho_0 u_0)(y)\,dy  \rt) = M_1 \pa_t^{\alpha - 1}\lt(e^{-t}\rt)\,,
$$
by setting $w = \pa_t^{\alpha - 1}(v)$, we finally have
\bq\label{eq_w}
w'' + w' + M_0 w = M_1 \pa_t^{\alpha - 1}\lt(e^{-t}\rt).
\eq
Hence, we could try to solve the differential equation \eqref{eq_w} to get the explicit solution $w$. However, recovering $v$ and other quantities also needs a careful inversion of the involved fractional operators.
\end{remark}

%
%
%
%
\section{Blow-up phenomena of the system \eqref{main_eq} with pressure and viscosity}
In this section, we consider the barotropic compressible damped Navier-Stokes-Poisson equations with non-local interaction forces:
\begin{subequations}\label{eq2}
\begin{align}
&\pa_t \rho + \pa_x (\rho u) = 0, \qquad (t,x) \in \R_+ \times \om(t),\\
&\pa_t (\rho u) + \pa_x (\rho u^2) + \pa_x p(\rho) -\pa_x(\mu(\rho)\pa_x u)= -\rho u -(\pa W \star \rho)\rho,\label{eq2_2}
\end{align}
\end{subequations}
where $W(x) = - |x| + \frac{|x|^2}{2}$, subject to initial density and velocity
\bq\label{ini_eq2}
(\rho(t,\cdot)u(t,\cdot))|_{t=0} = (\rho_0,u_0).
\eq
Here the pressure law $p$ and the viscosity coefficient $\mu$ are given by $p(\rho) = \rho^\gamma$ and $\mu(\rho) = \rho^\alpha$ with $\gamma, \alpha > 1$.

Note that the term $\rho^{-1}\pa_x p$ is well-defined for the possible vacuum states $\rho = 0$ if $\gamma > 1$. We also notice that the pressure term in the system \eqref{eq2} can be formally derived from part of the potential term $\rho(\pa_x W \star \rho)$ by localizing part of $W$ near the origin. In this formal derivation, we obtain the system \eqref{eq2} with $\gamma = 2$. 

For the investigation of the finite-time blow-up, we assume that there exists a smooth $(\rho, u )\in \mc^2 \times \mc^3$ solutions in $\R \times [0,T^*)$ to the system \eqref{eq2} emanating from the initial data \eqref{ini_eq2} such that 
\bq\label{add-conti-p}
\lt(\sum_{0\leq k \leq 2}|\pa_x^k \rho_0(a_0)|\rt)\lt( \sum_{0\leq k \leq 2}|\pa_x^k \rho_0(b_0)| \rt)= 0.
\eq
By setting $d = \pa_x u$, we can easily verify that 
\begin{align}\label{re-eq2}
\begin{aligned}
& \dot\rho = - \rho d,\cr
& \dot d = -d^2 - d - \pa_x(\rho^{-1}\pa_xp(\rho)) +\pa_x(\rho^{-1}\pa_x(\mu(\rho)\pa_x u)) + 2\rho - M_0,
\end{aligned}
\end{align}
where $\dot{\xi}$ denotes the material derivative of $\xi$.
Then it follows from $\eqref{eq2_2}$ as in \cite[Lemma 2.1]{CH} that
\bq\label{est-press-1}
\sum_{0 \leq k \leq 2}|\pa_x^k \rho(t,\eta(t,x))| \leq \sum_{0\leq k \leq 2}|\pa_x^k \rho_0(x)|\exp\lt(C\sum_{1 \leq k \leq 3}\int_0^t |\pa_x^k u(s,\eta(s,x))|\,ds\rt).
\eq
We also notice that
\bq\label{est-press-2}
\pa_x(\rho^{-1}\pa_x \rho^\gamma) = \gamma(\gamma-2)\rho^{\gamma-3}(\pa_x\rho)^2 + \gamma \rho^{\gamma-2}\pa_x^2 \rho,
\eq
and
\begin{align}\label{est-press-3}
\begin{aligned}
\pa_x(\rho^{-1}\pa_x(\mu(\rho)\pa_x u))&=\rho^{\alpha-1}\pa_x^3 u+ 
(2\alpha-1)\rho^{\alpha-2}\pa_x\rho\pa_x^2 u
+\alpha\rho^{\alpha-2}\pa_x^2\rho\pa_x u\cr
&\quad +\alpha(\alpha-2)\rho^{\alpha-3}(\pa_x\rho)^2\pa_x u.
\end{aligned}
\end{align}
Thus the right hand sides of the equalities \eqref{est-press-2} and \eqref{est-press-3} are bounded if $\gamma, \alpha \in \{2\} \cup [3,\infty)$, and $\sum_{0\leq k \leq 2}|\pa_x^k \rho|$ and $\sum_{1\leq k \leq 3}|\pa_x^k u|$ are bounded. Taking into account \eqref{est-press-1} and \eqref{add-conti-p}, we deduce
\[
\lt(\sum_{0\leq k \leq 2}|\pa_x^k \rho_0(a(t))|\rt)\lt( \sum_{0\leq k \leq 2}|\pa_x^k \rho_0(b(t))| \rt)= 0 \quad \mbox{for} \quad t \in [0,T^*).
\]
Moreover it follows from \eqref{est-press-2} and \eqref{est-press-3} that
\[
\pa_x(\rho^{-1}\pa_x p)(t,y) = 0 \quad \mbox{and} \quad\pa_x((\rho^{-1}\pa_x(\mu(\rho)\pa_x u))(t,y) = 0,
\]
either for $y=a(t)$ or $y=b(t)$ for all $t \in [0,T^*)$. This implies from $\eqref{re-eq2}_2$ that
\bq\label{est_fin}
\mbox{either} \quad (\dot d +d^2 + d + M_0)(a(t)) = 0 \quad \mbox{or} \quad (\dot d +d^2 + d + M_0)(b(t)) = 0,
\eq
for all $t \in [0,T^*)$. 

\begin{theorem}\label{Th_NS}
Let $(\rho,u)$ be a $\mc^2 \times \mc^3$ classical solution in $\R\times [0,T^*)$ to the system \eqref{eq2}-\eqref{ini_eq2} with $\gamma, \alpha \in \{2\} \cup [3,\infty)$. Assume that either $x=a_0$ or $x=b_0$ satisfies 
\[
\sum_{0\leq k \leq 2}|\pa_x^k \rho_0(x)| = 0   \qquad \mbox{and} \qquad d_0(x) :=d(0)< d_- = \frac{-1 - \sqrt{1 - 4M_0}}{2}\,.
\]
Then  $T^*$ is finite. Furthermore, we have
\[
T^* \leq \min_{x \in \{a_0,b_0\}} \frac{1}{d_- - d_0(x)}.
\]
\end{theorem}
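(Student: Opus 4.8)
The plan is to reduce the problem, along one of the two boundary characteristics, to a scalar autonomous Riccati equation and then to run an elementary ODE comparison. By the symmetry between the two endpoints it suffices to treat a boundary point — say $x=a_0$ — at which $\sum_{0\le k\le 2}|\pa_x^k\rho_0(a_0)|=0$ and $d_0(a_0)<d_-$, and to show that then $T^*\le\lt(d_- - d_0(a_0)\rt)^{-1}$; the estimate stated in the theorem then follows by taking the minimum of such bounds over the endpoint(s) satisfying the hypotheses. Throughout, $d_\pm=\tfrac{-1\pm\sqrt{1-4M_0}}{2}$, so that $d^2+d+M_0=(d-d_-)(d-d_+)$ with $d_-\le d_+$.

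First I would propagate the flatness of $\rho$ along the characteristic $t\mapsto a(t):=\eta(t,a_0)$. Since $u\in\mc^3$, the exponential factor in \eqref{est-press-1} stays finite on every compact time interval in $[0,T^*)$, so \eqref{est-press-1} combined with $\sum_{0\le k\le 2}|\pa_x^k\rho_0(a_0)|=0$ yields $\sum_{0\le k\le 2}|\pa_x^k\rho(t,a(t))|=0$ for all $t\in[0,T^*)$. In particular $\rho(t,a(t))=0$; and because $\gamma,\alpha\in\{2\}\cup[3,\infty)$, each term on the right-hand sides of \eqref{est-press-2} and \eqref{est-press-3} is a nonnegative power of $\rho$ times one of the quantities $\rho$, $\pa_x\rho$, $\pa_x^2\rho$, so all of these terms vanish at $a(t)$. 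Setting $d(t):=\pa_x u(t,a(t))$, which is $\mc^1$ on $[0,T^*)$, and evaluating $\eqref{re-eq2}_2$ at $y=a(t)$, the pressure, viscosity and $2\rho$ contributions drop, leaving the autonomous Riccati equation
\bq\label{riccati-plan}
\dot d=-(d^2+d+M_0)=-(d-d_-)(d-d_+),\qquad d(0)=d_0(a_0)<d_-.
\eq

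It remains to analyse \eqref{riccati-plan}. On the half-line $\{d<d_-\}$ one has $\dot d=-(d-d_-)(d-d_+)<0$, so this set is forward invariant and $d$ is strictly decreasing while it exists; moreover $d<d_-\le d_+$ forces $|d-d_+|\ge|d-d_-|$, hence $\dot d\le-(d-d_-)^2$. Putting $h(t):=-\lt(d(t)-d_-\rt)^{-1}>0$ we get $\dot h=\dot d\,(d-d_-)^{-2}\le-1$, so $h(t)\le h(0)-t=\lt(d_- - d_0(a_0)\rt)^{-1}-t$. Since $h$ stays strictly positive as long as $d$ is finite, $d$ must escape to $-\infty$ by time $\lt(d_- - d_0(a_0)\rt)^{-1}$; as $d$ (being $\pa_x u$ along a characteristic, with $u\in\mc^3$) stays finite on all of $[0,T^*)$, this forces $T^*\le\lt(d_- - d_0(a_0)\rt)^{-1}$.

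The one point requiring care is the bookkeeping in the second paragraph: checking that the pressure and viscosity contributions genuinely \emph{vanish} — not merely stay bounded — along the boundary characteristic, which is exactly where the restriction $\gamma,\alpha\in\{2\}\cup[3,\infty)$ and the flatness assumption \eqref{add-conti-p} are used. The rest is elementary, and the degenerate case $M_0=\tfrac14$ (where $d_-=d_+=-\tfrac12$) is covered verbatim, the inequality $\dot d\le-(d-d_-)^2$ then being an identity.
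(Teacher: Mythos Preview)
Your proof is correct and follows essentially the same route as the paper's: reduce to the autonomous Riccati equation \eqref{est_fin} along the boundary characteristic using the flatness propagation \eqref{est-press-1} and the structure of \eqref{est-press-2}--\eqref{est-press-3}, then run the comparison $\dot d\le -(d-d_-)^2$ to force blow-up by time $(d_--d_0)^{-1}$. The paper places the reduction to \eqref{est_fin} in the discussion preceding the theorem and states the comparison solution directly, while you fold everything into the proof and phrase the comparison via $h=-(d-d_-)^{-1}$; these are presentational differences only. One small imprecision: your claim that each term in \eqref{est-press-3} is ``a nonnegative power of $\rho$ times one of $\rho,\pa_x\rho,\pa_x^2\rho$'' omits the bounded $\pa_x^k u$ factors also present, but since $u\in\mc^3$ this does not affect the vanishing.
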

\begin{proof}It follows from \eqref{est_fin} that for $1 - 4M_0 > 0$
\[
\dot d = -(d^2 + d + M_0) = -(d - d_+)(d-d_-), \quad \mbox{where} \quad d_\pm = \frac{-1 \pm \sqrt{1 - 4M_0}}{2}.
\]
If $d_0 < d_-$, then 
\[
\dot d \leq -(d - d_-)^2 \quad \mbox{and} \quad d \leq \frac{d_0 - d_-}{1 + (d_0 - d_-)t} + d_-.
\]
Since $d_0 - d_- < 0$, thus $d(t,y) = \pa_x u(t,y)$ with $y=a(t)$ or $y=b(t)$ will blow up before the time $T^*$ which satisfies 
\[
T^* \leq \min_{x \in \{a_0,b_0\}} \frac{1}{d_- - d_0(x)}.
\]
This completes the proof.

\end{proof}
\begin{remark}
Theorem {\rm\ref{Th_NS}} can be generalized to the case of compactly supported initial density with possible vacuum regions $\rho_0=0$. 
\end{remark}

%
%
%
%
\appendix
\section{Existence and uniqueness of local-in-time classical solutions}
In this section, we study the existence of local-in-time classical solutions to the system \eqref{lag_eq}. We prove the following theorem
\begin{theorem}\label{thm_local} Let $s \geq 1$. Suppose that $(\rho_0,u_0) \in H^s(\om_0) \times H^{s+1}(\om_0)$. Then for any constants $0<M < \widetilde{M}$ there exists a $T_0 > 0$,  depending only on $M$ and $\widetilde M$, such that if $\|u_0\|_{H^{s+1}} < M$, then the system \eqref{lag_eq} has a unique solution $(f,v) \in \mc([0,T_0]; H^s(\om_0)) \times \mc([0,T_0]; H^{s+1}(\om_0))$ satisfying
\[
\sup_{0 \leq t \leq T_0} \|v(t,\cdot)\|_{H^{s+1}} \leq \widetilde M.
\]
\end{theorem}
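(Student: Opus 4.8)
The plan is to recast \eqref{lag_eq}--\eqref{ini_lag_eq} as a fixed-point problem for the Lagrangian velocity $v$ alone, since once $v$, and hence $\eta_v(t,x):=x+\int_0^t v(\sigma,x)\,d\sigma$, is known, \eqref{lag_eq1} simply recovers $f(t,x):=\rho_0(x)/\pa_x\eta_v(t,x)$. For $v\in\mc([0,T_0];H^{s+1}(\om_0))$ with $v(0,\cdot)=u_0$, set
\[
N[v](t,x):=\int_{\om_0}\pa W\big(\eta_v(t,x)-\eta_v(t,y)\big)\,\rho_0(y)\,dy,
\]
\[
\Phi[v](t,x):=u_0(x)-\int_0^t\big(v(\tau,x)+N[v](\tau,x)\big)\,d\tau,
\]
so that a fixed point of $\Phi$ is exactly a solution of \eqref{lag_eq2}. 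The idea is to apply Banach's fixed-point theorem to $\Phi$ on the closed set
\[
\mathcal X:=\Big\{v\in\mc([0,T_0];H^{s+1}(\om_0)):\ v(0,\cdot)=u_0,\ \sup_{0\le t\le T_0}\|v(t,\cdot)\|_{H^{s+1}}\le\widetilde M\Big\},
\]
which is nonempty (it contains the constant-in-time $v\equiv u_0$) because $\|u_0\|_{H^{s+1}}<M<\widetilde M$.

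The one delicate point is that the kernel $\pa W(z)=-\mathrm{sgn}(z)+z$ is only Lipschitz; this is handled by short-time monotonicity of the flow. Since $H^{s+1}(\om_0)\hookrightarrow\mc^1(\overline{\om_0})$ for $s\ge1$, every $v\in\mathcal X$ gives
\[
\pa_x\eta_v(t,x)=1+\int_0^t\pa_x v(\sigma,x)\,d\sigma\ge 1-CT_0\widetilde M\ge\tfrac12
\]
for all $(t,x)\in[0,T_0]\times\om_0$ once $T_0$ is small, so $\eta_v(t,\cdot)$ is a strictly increasing diffeomorphism and $\mathrm{sgn}(\eta_v(t,x)-\eta_v(t,y))=\mathrm{sgn}(x-y)$. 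Consequently
\[
N[v](t,x)=-\Big(2\!\int_{a_0}^{x}\!\rho_0(y)\,dy-M_0\Big)+M_0\,\eta_v(t,x)-\int_{\om_0}\eta_v(t,y)\,\rho_0(y)\,dy,
\]
i.e.\ $N[v]$ is \emph{affine} in $\eta_v$ and its only singular ingredient, $2\int_{a_0}^{x}\rho_0-M_0$, is frozen: it depends neither on $v$ nor on $t$, and it lies in $H^{s+1}(\om_0)$ because $\rho_0\in H^s$. This simultaneously removes the loss of derivatives and makes the two required estimates elementary.

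With this reduction the self-mapping property follows from $\|\eta_v(t,\cdot)\|_{H^{s+1}}\le\|x\|_{H^{s+1}(\om_0)}+T_0\widetilde M$, $\|\int_{a_0}^{\cdot}\rho_0\|_{H^{s+1}}\le C\|\rho_0\|_{H^s}$, and the fact that $H^s(\om_0)$ is a Banach algebra for $s\ge1$ in one dimension: these give $\sup_{[0,T_0]}\|N[v](t,\cdot)\|_{H^{s+1}}\le K$ with $K=K(\|\rho_0\|_{H^s},|\om_0|,\widetilde M)$, hence $\sup_{[0,T_0]}\|\Phi[v](t,\cdot)\|_{H^{s+1}}\le M+T_0(\widetilde M+K)\le\widetilde M$ for $T_0$ small (this is where the gap $M<\widetilde M$ is used), while $t\mapsto\Phi[v](t,\cdot)$ is Lipschitz into $H^{s+1}$ with $\Phi[v](0,\cdot)=u_0$. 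For the contraction, the frozen singular parts of $N[v_1]$ and $N[v_2]$ cancel, so $N[v_1](\tau,\cdot)-N[v_2](\tau,\cdot)=M_0(\eta_{v_1}-\eta_{v_2})(\tau,\cdot)-\int_{\om_0}(\eta_{v_1}-\eta_{v_2})(\tau,y)\rho_0(y)\,dy$, whence $\|N[v_1](\tau,\cdot)-N[v_2](\tau,\cdot)\|_{H^{s+1}}\le C\int_0^\tau\|v_1(\sigma,\cdot)-v_2(\sigma,\cdot)\|_{H^{s+1}}\,d\sigma$ and therefore $\sup_{[0,T_0]}\|\Phi[v_1]-\Phi[v_2]\|_{H^{s+1}}\le(T_0+CT_0^2)\sup_{[0,T_0]}\|v_1-v_2\|_{H^{s+1}}\le\tfrac12\sup_{[0,T_0]}\|v_1-v_2\|_{H^{s+1}}$ for $T_0$ small. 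Banach's theorem then gives a unique $v\in\mathcal X$; differentiating the fixed-point identity in $t$ recovers \eqref{lag_eq2}, and $f:=\rho_0/\pa_x\eta_v$ lies in $\mc([0,T_0];H^s(\om_0))$ and solves \eqref{lag_eq1}, since $\pa_x\eta_v\in\mc([0,T_0];H^s)$ is bounded below by $\tfrac12$ and $H^s(\om_0)$ is stable under division by functions bounded away from zero.

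For uniqueness in the full class $\mc([0,T_0];H^s)\times\mc([0,T_0];H^{s+1})$ I would use the same cancellation: for any two classical solutions $\pa_x\eta_i(0,\cdot)=1$ and $\pa_x\eta_i$ is continuous, so both $\eta_i(t,\cdot)$ are increasing for small $t$, on that interval the difference obeys a Gronwall inequality with no singular contribution and must vanish, and a standard continuation argument propagates this to all of $[0,T_0]$. The main — and essentially the only — obstacle is the low regularity of $\pa W$; the key realization is that short-time monotonicity of $\eta$ turns the singular non-local force into an affine operator of $\eta$ with a fixed $H^{s+1}$ datum, after which the iteration closes at the claimed regularity with no commutator, paradifferential, or mollification arguments.
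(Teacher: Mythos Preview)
Your proof is correct and rests on the same key observation as the paper's --- short-time monotonicity of the flow turns $\mathrm{sgn}(\eta_v(t,x)-\eta_v(t,y))$ into $\mathrm{sgn}(x-y)$, so the non-local force becomes affine in $\eta_v$ with a frozen $H^{s+1}$ inhomogeneity --- but the \emph{mechanism} you use to close the argument is genuinely different. The paper sets up a Picard iteration $(\eta^n,v^n)\to(\eta^{n+1},v^{n+1})$, establishes uniform $H^{s+1}$ bounds by induction, proves the sequence is Cauchy only in $L^2$, and then recovers the top-norm continuity of the limit via interpolation, weak-$*$ compactness in $L^\infty_t H^{s+1}_x$, and a separate argument for norm continuity at each $t_0$ (using $\limsup$/$\liminf$ and the time-reversed problem). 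You instead run a Banach contraction \emph{directly} in $\mc([0,T_0];H^{s+1})$, which is possible precisely because the affine structure of $N[v]$ involves no derivative loss; this lets you skip the low-norm Cauchy step, the interpolation, and the weak-compactness/regularity upgrade entirely. Your route is shorter and more transparent for this particular system; the paper's route is the standard template for quasilinear problems where a direct top-norm contraction would fail, so it generalizes more readily but is heavier machinery than needed here. One minor point: you invoke the Banach-algebra property of $H^s(\om_0)$, but your argument never actually uses a product estimate --- the affine structure means only linear bounds are required, so that remark is harmless but superfluous.
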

\begin{proof}
We approximate the solutions of system  \eqref{lag_eq} by the sequence $\eta^n,v^n$ solving the integro-differential system:
\begin{subequations}\label{eq_app}
\begin{align}
\pa_t \eta^{n+1}(t,x) &= v^n(t,x), \quad x \in \om_0, \quad t > 0,\label{eq_app1}\\
\pa_t v^{n+1}(t,x) &= -v^{n+1}(t,x) - \int_{\om_0} \pa W(\eta^{n+1}(t,x) - \eta^{n+1}(t,y)) \rho_0(y)\,dy, \label{eq_app2}
\end{align}
\end{subequations}
with the initial data and first iteration step defined by
\[
(\eta^n(t,x),v^n(t,x))|_{t=0} = (x,u_0) \qquad \mbox{for all} \quad n \geq 1, \quad  x \in \om_0,
\]
and
\[
v^0(t,x) = u_0, \quad (t,x) \in \R_+ \times \om_0.
\]
To simplify the notation, from now on we drop the dependence on the spatial domain in the symbols of functional spaces.

$\bullet$ {\bf Step 1}. (Uniform bounds): We claim that there exists $T_0 > 0$ such that
\[
\sup_{0 \leq t \leq T_0} \|v^n(t,\cdot)\|_{H^{s+1}} \leq \widetilde M \quad \mbox{for} \quad n \in \N \cup \{0\}.
\]
To prove this claim, we use an induction argument. In the first iteration step, we find that
\[
\sup_{0 \leq t \leq T} \|v^0(t,\cdot)\|_{H^{s+1}} = \|u_0\|_{H^{s+1}} \leq M < \widetilde M.
\]
Let us assume that 
\[
v^n \in \mc([0,T];H^{s}) \quad \mbox{and} \quad \sup_{0 \leq t \leq T} \|v^n(t,\cdot)\|_{H^{s+1}} \leq \widetilde M,
\]
for some $T > 0$. Then we check that the linear approximations $(\eta^{n+1},v^{n+1})$ from the system \eqref{eq_app} are well-defined and they satisfy  $(\eta^{n+1},v^{n+1}) \in \mc([0,T];H^{s+1}) \times \mc([0,T];H^{s+1})$. We begin by estimating $\eta^{n+1}$. It follows from \eqref{eq_app1} that
\[
\eta^{n+1}(t,x) = x + \int_0^t v^n(s,x)\,ds \quad \mbox{and} \quad \pa_x^k \eta^{n+1}(t,x) = \delta_{k,1} + \int_0^t \pa_x^k v^n(s,x)\,ds,
\]
for $k \geq 1$, where $\delta_{k,1}$ denotes Kronecker delta, i.e., $\delta_{k,1} = 1$ if $k=1$ and $\delta_{k,1} = 0$ otherwise. From this expression, it is straightforward to get
\[
\|\eta^{n+1}(t,\cdot)\|_{L^2} \leq C|\om_0| + \int_0^t \|v^n(s,\cdot)\|_{L^2}\,ds \leq C|\om_0| + T\|v^n\|_{L^\infty(0,T;L^2)}
\]
and
\[
\|\eta^{n+1}(t,\cdot)\|_{\dot{H}^k} \leq \sqrt{|\om_0|}\delta_{k,1} + \int_0^t \|v^n(s,\cdot)\|_{\dot{H}^k}ds \leq \sqrt{|\om_0|}\delta_{k,1} + T\|v^n(s,\cdot)\|_{\dot{H}^k},
\]
for some $T > 0$, where $\dot{H}^k$ represents the homogeneous Sobolev space. This yields
\[
\sup_{0 \leq t \leq T}\|\eta^{n+1}(t,\cdot)\|_{H^s} \leq C(\om_0) + T \widetilde{M} =: C_0.
\]
Moreover, we find that there exists $T_1$,   such that  $0< T\leq T$ and
\[
\pa_x \eta^{n+1}(t,x) = 1 + \int_0^t \pa_x v^n(s,x)\,ds \geq 1 - T_1 \widetilde M > 0.
\]
For the estimate of $\|v^n\|_{H^{s+1}}$, we first notice that
\[
sgn(\eta^{n+1}(t,x) - \eta^{n+1}(t,y)) = sgn(x-y) \quad \mbox{for} \quad t \in [0,T],
\]
since $\eta^{n+1}(t,x)$ is uniquely well-defined, i.e., there are no crossing between trajectories. This enables us to rewrite \eqref{eq_app2} as
\[
\pa_t v^{n+1}(t,x) = -v^{n+1}(t,x) + \int_{\om_0} sgn(x - y) \rho_0(y)\,dy - \eta^{n+1}(t,x)M_0 + \int_{\om_0} \eta^{n+1}(t,y)\rho_0(y)\,dy,
\]
and further, solving the above ODE we get
\begin{align}\label{exp_vn}
\begin{aligned}
v^{n+1}(t,x) &= u_0(x)e^{-t} + (1 - e^{-t})\lt(2\int_{a_0}^x \rho_0(y)\,dy - M_0 \rt) - M_0\int_0^t e^{-(t-s)}\eta^{n+1}(s,x)\,ds\cr
&\quad + \int_0^t \int_{\om_0} e^{-(t-s)}\eta^{n+1}(s,y)\rho_0(y)\,dy\,ds.
\end{aligned}
\end{align}
For the spatial-derivative, we easily find
\bq\label{exp_vn1}
\pa_x^k v^{n+1}(t,x) = \pa_x^k u_0(x) e^{-t} + 2(1 - e^{-t})\pa_x^{k-1} \rho_0(x) -M_0 \int_0^t e^{-(t-s)} \pa_x^k \eta^{n+1}(s,x)\,ds,
\eq
for $k\geq1$. Then, we obtain from \eqref{exp_vn} and \eqref{exp_vn1} that
$$\begin{aligned}
\|v^{n+1}(t,\cdot)\|_{L^2} 
& \leq e^{-t}\|u_0\|_{L^2} + M_0(1-e^{-t})\sqrt{|\om_0|} + M_0\int_0^t e^{-(t-s)}\|\eta^{n+1}(s,\cdot)\|_{L^2}\,ds\cr
&\quad + \sqrt{|\om_0|} \int_0^t \int_{\om_0} e^{-(t-s)} \eta^{n+1}(s,y)\rho_0(y)\,dy\,ds\cr
&\leq e^{-t}\|u_0\|_{L^2} + \left(\sqrt{|\om_0|}M_0 + M_0 C_1 +C_1 |\om_0| \|\rho_0\|_{L^\infty} \right)  (1 - e^{-t})\cr
& = e^{-t}\|u_0\|_{L^2} + (M_0(C_1 + \sqrt{|\om_0|}) +C_1 |\om_0| \|\rho_0\|_{L^\infty})(1 - e^{-t})
\end{aligned}$$
and
$$\begin{aligned}
\|v^{n+1}(t,\cdot)\|_{\dot{H}^k} &\leq e^{-t}\|u_0\|_{\dot{H}^k} + \left( 2 \|\rho_0\|_{\dot{H}^{k-1}} + M_0C_1\right) (1 - e^{-t}) \quad \mbox{for} \quad k \geq 1,
\end{aligned}$$
respectively. Thus we conclude
\bq\label{est_vn}
\|v^{n+1}(t,\cdot)\|_{H^{s+1}} \leq e^{-t}\|u_0\|_{H^{s+1}} + C_2(1 - e^{-t}),
\eq
where $C_2 > 0$ is given by
\[
C_2 := M_0(C_1 + \sqrt{|\om_0|}) + \|\rho_0\|_{L^\infty} C_1 |\om_0| + 2\|\rho_0\|_{H^s} + M_0 C_1.
\]
The r.h.s. of \eqref{est_vn}:
\[
h(t) := e^{-t}\|u_0\|_{H^{s+1}} + C_2(1 - e^{-t}),
\]
is a decreasing function of time and $h(0) = \|u_0\|_{H^{s+1}} < M < \widetilde M$. This implies that we can choose $T_0$ small enough such that $0< T_0 \leq T_1$ and
\[
\sup_{0 \leq t \leq T_0} \|v^{n+1}(t,\cdot)\|_{H^{s+1}} \leq \widetilde M.
\]

$\bullet$ {\bf Step 2}. (Cauchy estimates): Set
\[
\etan(t,x) := \eta^{n+1}(t,x) - \eta^n(t,x) \quad \mbox{and} \quad \vn(t,x) := v^{n+1}(t,x) - v^n(t,x).
\]
Then we find that $\etan$ and $\vn$ satisfy
\[
\etan(t,x) = \int_0^t v^{n,n-1}(s,x)\,ds
\]
and
\[
\vn(t,x) = -M_0\int_0^t e^{-(t-s)} \etan(s,x)\,ds + \int_0^t \int_{\om_0} e^{-(t-s)}\etan(s,y)\rho_0(y)\,dy\,ds.
\]
This yields
\[
\|\etan(t,\cdot)\|_{L^2} \leq \int_0^t \|v^{n,n-1}(s,\cdot)\|_{L^2}\,ds
\]
and
$$\begin{aligned}
\|\vn(t,\cdot)\|_{L^2} &\leq  M_0\int_0^t\!\! e^{-(t-s)}\|\etan(s,\cdot)\|_{L^2}\,ds 
 + \|\rho_0\|_{L^\infty}\sqrt{|\om_0|}\int_0^t\!\! e^{-(t-s)}\|\etan(s,\cdot)\|_{L^2}\,ds\cr
&\leq (M_0 + \|\rho_0\|_{L^\infty}\sqrt{|\om_0|})\int_0^t \|\etan(s,\cdot)\|_{L^2}\,ds.
\end{aligned}$$
Introducing $\Delta_{\eta,v}^{n+1}(t):= \|\etan(t,\cdot)\|_{L^2} + \|\vn(t,\cdot)\|_{L^2}$ and combining the above estimates, we get
\[
\Delta_{\eta,v}^{n+1}(t) \leq C\int_0^t \Delta_{\eta,v}^n(s)\,ds \quad \mbox{for some} \quad C > 0.
\]
This implies
\[
\|\etan(t,\cdot)\|_{L^2} + \|\vn(t,\cdot)\|_{L^2} \ls \frac{T_0^{n+1}}{(n+1)!},
\]
for $t \leq T_0$. Thus, we find that $(\eta^n(t,x), v^n(t,x))$ is a Cauchy sequence in $\mc([0,T_0]; L^2) \times \mc([0,T_0]; L^2)$.

$\bullet$ {\bf Step 3}. (Regularity of limiting functions): It follows from  Step 2 that there exist limit functions $\eta$ and $v$ such that 
\[
(\eta^n, v^n) \to (\eta,v) \quad \mbox{in} \quad \mc([0,T_0]; L^2) \times \mc([0,T_0]; L^2).
\]
Interpolating this with the uniform bound estimates in Step 1, we obtain
\bq\label{est_lim1}
(\eta^n, v^n) \to (\eta,v) \quad \mbox{in} \quad \mc([0,T_0]; H^s) \times \mc([0,T_0]; H^s) \quad \mbox{as} \quad n \to \infty.
\eq
We now claim that $(\eta,v) \in \mc([0,T_0]; H^{s+1}) \times \mc([0,T_0]; H^{s+1})$. Note that we can easily check that $v \in \mc([0,T_0]; H^{s+1})$ implies $\eta \in \mc^1([0,T_0]; H^{s+1})$ due to the above convergence and \eqref{eq_app1}. Thus, it suffices to show that  $v \in \mc([0,T_0]; H^{s+1})$.  It follows from  Step 1 that there exists a weakly convergent subsequence $v^{n_k} \rightharpoonup \tilde v$ as $k \to \infty$, such that
\[
\|\tilde v(t,\cdot)\|_{H^{s+1}} \leq \liminf_{k \to \infty}\|v^{n_k}(t,\cdot)\|_{H^{s+1}}, \quad t \in [0,T_0],
\]
for some $\tilde v \in L^\infty(0,T;H^{s+1})$. This together with \eqref{est_lim1} yields
\[
\tilde v(t) = v(t) \quad \mbox{in} \quad H^{s+1} \quad \mbox{for each} \quad t \in [0,T_0].
\]
Thus we have
\[
\sup_{0 \leq t \leq T_0} \|v(t,\cdot)\|_{H^{s+1}} \leq \widetilde M.
\]
We next show that 
\bq\label{est_reg0}
v \in \mc_w([0,T_0]; H^{s+1}), \quad \mbox{i.e.,}  \quad v(t) \rightharpoonup v(t_0) \mbox{ in } H^{s+1} \quad \mbox{as} \quad t \to t_0, 
\eq 
for $t_0 \in [0,T_0].$ Without loss of generality, we may assume $t_0 = 0$. Then we obtain from the weak lower semi-continuity and \eqref{est_lim1} that
\bq\label{est_reg1}
\|u_0\|_{H^{s+1}} \leq \liminf_{t \to 0+}\|v(t)\|_{H^{s+1}}.
\eq
Thus the weak continuity can be obtained from the strong convergence \eqref{est_lim1} and \eqref{est_reg1}. Indeed, for a sequence $t_k \subset [0,T]$ such that $t_k \to 0$ as $k \to \infty$, we have $\lim_{k \to \infty}\|v(t_k) - u_0\|_{H^s} \to 0$ due to \eqref{est_lim1} and $\|u_0\|_{H^{s+1}} \leq \widetilde M$. 

On the other hand, it follows from \eqref{est_vn} and the weak lower semi-continuity that
\bq\label{est_reg2}
\limsup_{t \to 0+}\|v(t,\cdot)\|_{H^{s+1}} \leq \|u_0\|_{H^{s+1}}.
\eq
Combining \eqref{est_reg1} and \eqref{est_reg2}, we find 
\[
\|v(t,\cdot)\|_{H^{s+1}} \to \|v(t_0,\cdot)\|_{H^{s+1}}, 
\]
as $t \to t_0+$, and this together with \eqref{est_reg0} implies 
\[
\lim_{t \to t_0+}\|v(t) - v_0\|_{H^{s+1}} = 0 \quad \mbox{for} \quad t_0 \in [0,T_0].
\]
For the continuity from the left hand side, we use a change of variable $t \mapsto T_0 - t$ by taking into account the time-reversed problem. 

$\bullet$ {\bf Step 4}. (Existence): In Step 3, we found
\[
(\eta^n, v^n) \to (\eta, v) \quad \mbox{in } \mc([0,T_0]; H^s),
\]
and this implies that the limit functions $(\eta,v)$ are solutions to \eqref{eq_tra}-\eqref{lag_eq2} in the sense of distributions. 
In Step 3, we also proved 
that $(\eta,v) \in \mc([0,T_0]; H^{s+1})$ and
\[
\sup_{0 \leq t \leq T_0} \|v(t,\cdot)\|_{H^{s+1}} \leq \widetilde M.
\]
Subsequently, we get
\[
\inf_{0 \leq t \leq T_0} \inf_{x \in \om_0} \pa_x \eta(t,x) > 0.
\]
Finally, we use the expression for $f$ in \eqref{lag_eq1} together with the above estimate of $\pa_x \eta$ to deduce $f \in \mc([0,T_0]; H^s)$.

$\bullet$ {\bf Step 5}. (Uniqueness): Let $(f,v)$ and $(\tilde f, \tilde v)$ be the two classical solutions constructed in the previous steps corresponding to the same initial data $(\rho_0,u_0)$. Set $\eta$ and $\tilde \eta$ the trajectories with respect to $v$ and $\tilde v$, respectively, i.e.,
\[
\pa_t \eta(t,x) = v(t,x) \quad \mbox{and} \quad \pa_t \tilde \eta(t,x) = \tilde v(t,x) \quad \mbox{for} \quad (t,x) \in [0,T_0] \times \om_0.
\]
Then similarly as in Step 2 we get
\[
\|v(t,\cdot) - \tilde v(t,\cdot)\|_{L^2} \leq C\int_0^t \|v(s,\cdot) - \tilde v(s,\cdot)\|_{L^2}.
\]
This yields
\[
v \equiv \tilde v \quad \mbox{in} \quad \mc([0,T_0];L^2).
\]
Furthermore, we can easily check $v = \tilde v$ in $\mc([0,T_0];H^{s+1})$ by using the similar argument as before, in Step 3. In particular, this concludes 
\[
\pa_x \eta(t,x) = \pa_x \tilde \eta(t,x) \quad \mbox{in} \quad \mc^1([0,T_0];H^s).
\]
Hence, we obtain
\[
f(t,x) = \frac{\rho_0(x)}{\pa_x \eta(t,x)} = \frac{\rho_0(x)}{\pa_x \tilde \eta(t,x)} = \tilde f(t,x)\quad \mbox{in} \quad \mc([0,T_0];H^s).
\]
\end{proof}

\begin{remark}
It follows from Theorem {\rm\ref{thm_local}} that 
\[
(f,v) \in \lt(\mc^1([0,T]; H^{s-1}) \cap \mc([0,T_0]; H^s)\rt) \times \lt(\mc^1([0,T]; H^s) \cap \mc([0,T_0]; H^{s+1})\rt)
\]
for $s \geq 1$, due to the structure of the system \eqref{lag_eq}. In particular, if $s = 2$, then we have
\[
(f,v) \in \lt(\mc^1([0,T]; \mc(\om_0)) \cap \mc([0,T_0]; \mc^1(\om_0))\rt) \times \lt(\mc^1([0,T]; \mc^1(\om_0)) \cap \mc([0,T_0]; \mc^2(\om_0))\rt).
\]
Using the regularity for $v$, we get
\bq\label{reg_eta}
\eta' = v \in \mc([0,T_0]; \mc^2(\om_0)), \quad \mbox{i.e.,} \quad \eta \in \mc^1([0,T_0]; \mc^2(\om_0)).
\eq
On the other hand, by expanding the interaction term in \eqref{lag_eq2}, we find
\bq\label{eq_exp}
v' = -v - M_0\eta + \int_{\om_0} x \rho_0(x)\,dx + (1 - e^{-t})\int_{\om_0} \rho_0(x) u_0(x)\,dx,
\eq
due to 
\[
\frac{d}{dt} \int_{\om_0} \eta(t,x)\rho_0(x)\,dx = e^{-t}\int_{\om_0} \rho_0(x)u_0(x)\,dx.
\]
This and together with the regularity for $\eta$ and $v$ yields
\[
v \in \mc^1([0,T_0]; \mc^2(\om_0)),
\]
and again use the above regularity, \eqref{reg_eta}, and \eqref{eq_exp} to obtain
\[
v \in \mc^2([0,T_0] \times \om_0).
\]
For the regularity of $f$, we easily find that
\[
f = \frac{\rho_0}{\pa_x \eta} \in \mc^1([0,T_0] \times \om_0),
\]
since
\[
\rho_0 \in \mc^1(\om_0) \quad \mbox{and} \quad \pa_x \eta \in \mc^1([0,T_0] \times \om_0).
\]
Hence if we assume $(\rho_0,u_0) \in H^2(\om_0) \times H^3(\om_0)$, we have the unique local-in-time $\mc^1 \times \mc^2$-solution $(f,v)$ for the system \eqref{lag_eq}.
\end{remark}

By using this local-in-time existence and uniqueness results, it is trivial to construct classical solutions to the problem \eqref{main_eq} in the sense given in the introduction up to a maximal time interval $T>0$ by the standard procedure of continuing the solutions as long as the bounds are satisfied.


\section*{Acknowledgements}
JAC was partially supported by the Royal Society via a Wolfson Research Merit Award. YPC was supported by the ERC-Starting grant HDSPCONTR ``High-Dimensional Sparse Optimal Control''. JAC and YPC were partially supported by EPSRC grant EP/K008404/1. EZ has been partly supported by the National  Science  Centre  grant 2014/14/M/ST1/00108 (Harmonia).
The authors warmly thank Sergio P\'erez for providing us with the numerical results included in Section 4. We also thank the department of Mathematics at KAUST, and particularly A. Tzavaras, for their hospitality during part of this work.

%
%
%
%

\end{document}